\newlength{\lengthHold}
\newcommand{\bbZ}{\ensuremath{\mathbb{Z}}}
\newcommand{\bbN}{\ensuremath{\mathbb{N}}}
\newcommand{\hilb}[2]{\ensuremath{{#2}(t)}}
\newcommand{\tens}[1]{\ensuremath{\mathbb{T}\left(#1\right)}}
\newcommand{\pd}{\ensuremath{\operatorname{pd}}}
\newcommand{\gldim}{\ensuremath{\operatorname{gldim}}}
\newcommand{\syz}{\ensuremath{\Omega}}
\newcommand{\eps}{\ensuremath{\epsilon}}
\newcommand{\m}{\ensuremath{\mathfrak{m}}}
\newcommand{\p}{\ensuremath{\mathfrak{p}}}
\newcommand{\q}{\ensuremath{\mathfrak{q}}}
\newcommand{\Ker}{\ensuremath{\operatorname{Ker}}}
\newcommand{\Ext}{\ensuremath{\operatorname{Ext}}}
\newcommand{\Hom}{\ensuremath{\operatorname{Hom}}}
\newcommand{\depth}{\ensuremath{\operatorname{depth}}}
\newcommand{\rank}{\ensuremath{\operatorname{rank}}}
\newcommand{\til}[1]{\ensuremath{\widetilde{#1}}}
\newcommand{\shift}{\ensuremath{\Sigma}}
\newcommand{\dual}[2]{\ensuremath{\xi_{#1}^{#2}}}
\newcommand{\freemultword}[3]{\ensuremath{ {}^{#1}[{\mathsf{#2}}#3]}}
\newcommand{\freemultpower}[3]{\ensuremath{ {}^{#1}[{\mathsf{#2}}^{#3}]}}
\newcommand{\freemult}[2]{\ensuremath{{}^{#1}\mathsf {[#2]}}}
\newcommand{\free}[2]{\ensuremath{{}^{#1}\mathsf {#2}}}
\newcommand{\bas}{\ensuremath{\mathsf}}
\newcommand{\del}{\ensuremath{\partial}}
\newcommand{\calE}{\ensuremath{\mathcal{E}}}
\newcommand{\calF}{\ensuremath{\mathcal{F}}}
\newcommand{\calL}{\ensuremath{\mathcal{L}}}
\newcommand{\calM}{\ensuremath{\mathcal{M}}}
\newcommand{\calN}{\ensuremath{\mathcal{N}}}
\newcommand{\calR}{\ensuremath{\mathcal{R}}}
\newcommand{\calS}{\ensuremath{\mathcal{S}}}
\newcommand{\calT}{\ensuremath{\mathcal{T}}}
\newcommand{\calX}{\ensuremath{\mathcal{X}}}
\newcommand{\extRalg}{\ensuremath{\calR}}
\newcommand{\extSalg}{\ensuremath{\calS}}
\newcommand{\extTalg}{\ensuremath{\calT}}
\newcommand{\extMR}{\ensuremath{\calM_R}}
\newcommand{\extMS}{\ensuremath{\calM_S}}
\newcommand{\extNR}{\ensuremath{\calN_R}}
\newcommand{\extNT}{\ensuremath{\calN_T}}
\newcommand{\extLR}{\ensuremath{\calL}}
\newcommand{\amal}{\ensuremath{\sqcup}}
\newcommand{\scs}[1]{\ensuremath{\scriptstyle{#1}}}
\newcommand{\matrSetup}{\renewcommand{\arraystretch}{.5}
                        \setlength{\lengthHold}{\arraycolsep}
                        \setlength{\arraycolsep}{2pt}}
\newcommand{\matrReset}{\setlength{\arraycolsep}{\lengthHold}
                        \renewcommand{\arraystretch}{1}}
\newcommand{\onto}{\ensuremath{\twoheadrightarrow}}
\renewcommand{\to}{\ensuremath{\rightarrow}}
\newcommand{\from}{\ensuremath{\leftarrow}}
\newcommand{\lto}{\ensuremath{\longrightarrow}}
\newcommand{\xra}{\ensuremath{\xrightarrow}}
\newcommand{\xla}{\ensuremath{\xleftarrow}}
\theoremstyle{plain}
\newtheorem{introThm}{Theorem}
\newtheorem{introCor}[introThm]{Corollary}
\newtheorem{theorem}{Theorem}[section]
\newtheorem{proposition}[theorem]{Proposition}
\newtheorem{corollary}[theorem]{Corollary}
\newtheorem{lemma}[theorem]{Lemma}
\theoremstyle{definition}
\newtheorem{definition}[theorem]{Definition}
\newtheorem{subsec}[theorem]{}
\newtheorem{example}[theorem]{Example}
\newtheorem{notation}[theorem]{Notation}
\newtheorem{construction}[theorem]{Construction}
\theoremstyle{remark}
\newtheorem{remark}[theorem]{Remark}
\numberwithin{equation}{section}
\begin{document}

\title[Cohomology over fiber products]{Cohomology over Fiber Products of Local Rings}
\author{W. Frank Moore}
\date{\today}
\subjclass[2000]{13D02, 13D07, 13D40, 16S10, 16E30}
\thanks{This research was partially supported through NSF grant \#DMS-0201904.}
\keywords{Yoneda algebra, fiber product, minimal free resolution, Poincar\'e series}

\begin{abstract}
Let $S$ and $T$ be local rings with common residue field $k$, let $R$ be the
fiber product $S \times_k T$, and let $M$ be an $S$-module.  The Poincar\'e series $P^R_M$ of $M$
has been expressed in terms of $P^S_M$, $P^S_k$ and $P^T_k$ by Kostrikin and Shafarevich, and by Dress and Kr\"amer.
Here, an explicit minimal resolution, as well as theorems on the structure of $\Ext_R(k,k)$ and $\Ext_R(M,k)$
are given that illuminate these equalities.  Structure theorems for the cohomology modules of fiber products of modules are also given.
As an application of these results, we compute the depth of cohomology modules over a fiber product.
\end{abstract}

\maketitle

\section*{Introduction} \label{secIntro}

In homological investigations one often has information on properties of a module over a certain ring,
and wants to extract information on its properties over a different ring.  In this paper we consider the following
situation: $S \to k \from T$ are surjective homomorphisms of rings, $k$ is a field, $R$ is the fiber product $S \times_k T$,
and $M$ an $S$-module.  We further assume that $S$ and $T$ are either local rings with common residue field
$k$, or connected graded $k$-algebras.

The starting point of this paper is the construction of an explicit minimal free resolution of $M$, viewed
as an $R$-module, from minimal resolutions of $M$ and $k$ over $S$ and $k$ over $T$.  This is carried out
in section \ref{secComplexConstr}.  The structure of the $R$-free resolution allows us to obtain precise information
on the multiplicative structure of cohomology over $R$.   Some of the results obtained in this work have
been proved in the graded case by use of standard resolutions.  However, no similar approach can be used in
the local case.

The symbol $\amal$ denotes a coproduct, also known as a free product, of $k$-algebras.  

\begin{introThm} \label{introExtIso}
The canonical homomorphism of graded $k$-algebras
\begin{equation*}\Ext_S(k,k) \amal \Ext_T(k,k) \to \Ext_{S \times_k T}(k,k)\end{equation*}
defined by the universal property of coproducts of $k$-algebras
is bijective.  For every $S$-module $M$, the canonical homomorphism of graded left $\Ext_R(k,k)$-modules
\begin{equation*}\Ext_R(k,k) \otimes_{\Ext_S(k,k)} \Ext_S(M,k) \to \Ext_R(M,k)\end{equation*}
defined by the multiplication map, is bijective.
\end{introThm}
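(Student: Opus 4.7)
The plan is to derive both assertions from the explicit minimal free $R$-resolutions constructed in Section~\ref{secComplexConstr}. Let $F^S$, $F^T$ be minimal free resolutions of $k$ over $S$ and $T$, and $G^S$ a minimal free resolution of $M$ over $S$. The construction of Section~\ref{secComplexConstr} should produce a minimal free $R$-resolution $F^R$ of $k$ whose underlying graded $R$-module is indexed by alternating words built out of basis elements of the augmentation quotients $\overline{F^S}=F^S/\m_S F^S$ and $\overline{F^T}=F^T/\m_T F^T$, together with a minimal free $R$-resolution $G^R$ of $M$ indexed by concatenations of a basis element of $\overline{G^S}$ with an alternating word beginning with a $T$-letter (or the empty word).

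Applying $\Hom_R(-,k)$ and invoking minimality, the $k$-vector spaces $\Ext_R(k,k)$ and $\Ext_R(M,k)$ acquire explicit bases indexed by these same words. On the coproduct side, the normal form for free products of augmented graded $k$-algebras identifies $\Ext_S(k,k)\amal\Ext_T(k,k)$ as a graded $k$-vector space with the direct sum of $k$ and all alternating tensor products of positive-degree parts of $\Ext_S(k,k)$ and $\Ext_T(k,k)$; dualizing matches this bijectively with the word basis of $\Ext_R(k,k)$. In the same way, $\Ext_R(k,k)\otimes_{\Ext_S(k,k)}\Ext_S(M,k)$ has a $k$-basis given by tensoring the trivial word and the words ending in a $T$-letter with a basis of $\Ext_S(M,k)$, which matches the basis of $\Ext_R(M,k)$.

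The main obstacle is upgrading these $k$-linear bijections to an algebra, respectively module, isomorphism, which requires showing that under the canonical map the Yoneda structure corresponds to concatenation of words. For a class $\alpha\in\Ext_R(k,k)$ supported on a single word $w$, I would construct a chain endomorphism of $F^R$ lifting $\alpha$ directly from the word decomposition: it should send the summand of $F^R$ indexed by a word $v$ to $\pm$ the summand indexed by the concatenation $wv$, invoking the $S$- or $T$-algebra multiplication when the adjoining letters lie in the same factor. Verifying this uses the recursive nature of the differential on $F^R$ together with minimality. This makes Yoneda composition in $\Ext_R(k,k)$ agree with concatenation-with-reduction, which is exactly the multiplication in $\Ext_S(k,k)\amal\Ext_T(k,k)$; since the canonical map out of the coproduct is an algebra map by construction, the basis match of the preceding paragraph then forces it to be an isomorphism of graded $k$-algebras.

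The module statement is proved by the analogous argument applied to $G^R$: the canonical map is $\Ext_R(k,k)$-linear by construction, the action of $\Ext_R(k,k)$ is realized at the chain level by left-concatenation onto the $\overline{G^S}$-letter (with the $\Ext_S(k,k)$-bilinearity emerging from the $S$-algebra multiplication on adjoining $S$-letters), and the basis match forces bijectivity.
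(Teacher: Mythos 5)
Your proposal takes essentially the same route as the paper: the explicit word-indexed minimal resolution of Construction \ref{complConstr}, the resulting basis/Hilbert-series match with the normal form of the coproduct (Lemma \ref{hilbLemma} and Corollary \ref{DreKraResult}), and chain maps realizing left multiplication by a dual basis class as concatenation of words, so that surjectivity plus the dimension count gives bijectivity. The only difference is that the paper verifies just the \emph{partial} multiplication table (Lemma \ref{multTable}: products $\dual{x}{R}\cdot\dual{w}{R}$ where the single letter $x$ and the leading letter of $w$ come from opposite factors), which already shows the length-one classes generate and is all that surjectivity requires; the full ``concatenation-with-reduction'' rule you describe, in which adjoining same-factor letters are merged via the Yoneda product of $\Ext_S(k,k)$, has no well-defined expression on the chosen basis and is not needed.
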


These isomorphisms relate the Poincar\'e series $P^R_M(t)$ of $M$ over
$R$ to $P^S_M(t)$, $P^S_k(t)$ and $P^T_k(t)$; this relationship was proved for $M = k$ by
Kostrikin and Shafarevich \cite{KosSha57}, and by Dress and Kr\"amer \cite[Theorem 1]{DreKra75} in the present setting.
In \cite{PolPos05}, Polishchuk and Positselski proved the preceding theorem, when $S$ and $T$ are
connected $k$-algebras by using cobar constructions.

By combining Theorem \ref{introExtIso} with an observation of Dress and Kr\"amer concerning
second syzygy modules over fiber products, we obtain the following corollary.

\begin{introCor}
Let $L$ be an $R$-module.  One then has $\Omega^2 L = M \oplus N$ where $M$ and $N$ are $S$ and $T$-modules respectively,
and an exact sequence of graded left $\extRalg$-modules
\begin{equation*}0 \to \left(\shift^{-2}\extRalg \otimes_{\extSalg} \Ext_S(M,k)\right)\oplus\left(\shift^{-2}\extRalg \otimes_{\extTalg} \Ext_T(N,k)\right)
    \to \extLR \to \extLR/\extLR^{\geq 2} \to 0\end{equation*}
where $\extRalg = \Ext_R(k,k)$, $\extSalg = \Ext_S(k,k)$, $\extTalg = \Ext_T(k,k)$ and $\extLR = \Ext_R(L,k)$.  
\end{introCor}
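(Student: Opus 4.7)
The plan is to combine the Dress–Kr\"amer decomposition of the second syzygy with Theorem~\ref{introExtIso}, and then recognize the resulting submodule as the bottom-truncated portion of $\calL$.

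First I would invoke the syzygy observation of Dress and Kr\"amer: for the fiber product $R = S \times_k T$ one has $\m_S \cdot \m_T = 0$ in $R$, and a careful analysis of the minimal presentation of $\syz L$ then yields the splitting $\syz^2 L = M \oplus N$ as $R$-modules, with $M$ annihilated by $\m_T$ (hence an $S$-module) and $N$ annihilated by $\m_S$ (hence a $T$-module). Next, take a minimal $R$-free resolution of $L$; truncating in degree $\geq 2$ gives a minimal $R$-free resolution of $\syz^2 L$, and because minimal resolutions realize Ext after applying $\Hom_R(-,k)$ with trivial differentials, we obtain an isomorphism of graded left $\calR$-modules
\begin{equation*}
\shift^{-2}\Ext_R(\syz^2 L, k) \iso \calL^{\geq 2}.
\end{equation*}
Here $\calL^{\geq 2}$ is a graded $\calR$-submodule of $\calL$ because $\calR$ is nonnegatively graded.

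Plugging in $\syz^2 L = M \oplus N$ yields $\Ext_R(\syz^2 L, k) = \Ext_R(M,k) \oplus \Ext_R(N,k)$, and applying Theorem~\ref{introExtIso} to each summand identifies the right-hand side with
\begin{equation*}
\bigl(\calR \otimes_\calS \Ext_S(M,k)\bigr) \oplus \bigl(\calR \otimes_\calT \Ext_T(N,k)\bigr).
\end{equation*}
Shifting by $\shift^{-2}$ and combining with the identification above places this graded $\calR$-module into $\calL$ precisely as $\calL^{\geq 2}$. The desired exact sequence is then the tautological short exact sequence $0 \to \calL^{\geq 2} \to \calL \to \calL/\calL^{\geq 2} \to 0$ with the leftmost term rewritten via this identification.

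The main technical point to verify is that the isomorphism $\shift^{-2}\Ext_R(\syz^2 L, k) \iso \calL^{\geq 2}$ is $\calR$-linear, not merely an isomorphism of graded $k$-vector spaces. This will follow from a chain-level argument: Yoneda multiplication by $\calR$ is computed by lifting cocycles along the minimal resolution, and truncating the resolution by two homological degrees translates into precisely the claimed degree shift in Ext. Once $\calR$-linearity is granted, the remaining ingredients—the direct sum decomposition and the invocation of Theorem~\ref{introExtIso}—are formal.
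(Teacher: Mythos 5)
Your proposal is correct and takes essentially the same route as the paper: Corollary~\ref{calRModExSeq} is obtained there by combining the Dress--Kr\"amer splitting $\syz^2 L\cong M\oplus N$ (Proposition~\ref{DreKraSyz}, proved exactly as you indicate, via minimality and $\m=\p\oplus\q$ with $\p B\cap\q B=0$) with Theorem~\ref{extIso} applied to each summand, the exact sequence being the tautological one for $\calL^{\geq 2}\subseteq\calL$ under the standard identification $\shift^{-2}\Ext_R(\syz^2 L,k)\cong\calL^{\geq 2}$. The only difference is one of emphasis: you flag the $\extRalg$-linearity of that identification as the technical point requiring a chain-level check, whereas the paper treats it as standard and leaves it implicit.
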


Theorem \ref{introExtIso} shows that $\Ext_{-}(k,k)$, as a functor in the ring argument, transforms products into coproducts.
We show that $\Ext_R(-,k)$, as a functor from $R$-modules to $\Ext_R(k,k)$-modules, has a similar property.
In the graded setting, this was shown by Polishchuk and Positselski \cite{PolPos05}.  Their methods do not extend
to the local case, where even the equality of Poincar\'e series given is new.
\begin{introThm} \label{introFibMod}
Let $M$, $N$, and $V$ be $S$, $T$ and $k$-modules respectively, so that we may define
the $R$-module $M \times_V N$ as in Theorem \ref{thmExtFibMod}.
The exact sequence of $R$-modules
\begin{equation*}0 \to M \times_V N \xra{\iota} M \times N \xra{\mu-\nu} V \to 0\end{equation*}
induces an exact sequence of graded left $\Ext_R(k,k)$-modules
\matrSetup
\begin{equation*}0 \to \Ext_R(V,k) \xra{\left(\begin{array}{c} \scs{\mu^*} \\ \scs{-\nu^*} \end{array}\right)} 
        \Ext_R(M,k)\oplus\Ext_R(N,k) \xra{\iota^*} \Ext_R(M \times_V N,k) \to 0.\end{equation*}
\matrReset
\end{introThm}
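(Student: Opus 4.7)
The plan is to apply the contravariant functor $\Ext_R(-,k)$ to the given short exact sequence, producing the standard long exact sequence
\[\cdots \to \Ext^i_R(V,k) \xra{\binom{\mu^*}{-\nu^*}} \Ext^i_R(M,k) \oplus \Ext^i_R(N,k) \xra{\iota^*} \Ext^i_R(M \times_V N,k) \xra{\delta^i} \Ext^{i+1}_R(V,k) \to \cdots\]
and to show that every connecting homomorphism $\delta^i$ vanishes. This is equivalent to showing that $\binom{\mu^*}{-\nu^*}$ is injective in every cohomological degree, and the claimed three-term exact sequence is then simply the resulting segment of the long exact sequence.

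To analyze this injectivity, I would invoke Theorem \ref{introExtIso} to identify
\[\Ext_R(V,k) \iso \extRalg \otimes_k V^*, \quad \Ext_R(M,k) \iso \extRalg \otimes_{\extSalg} \Ext_S(M,k), \quad \Ext_R(N,k) \iso \extRalg \otimes_{\extTalg} \Ext_T(N,k)\]
as left $\extRalg$-modules, where the first identification uses that $V$ is a $k$-vector space, so $\Ext_S(V,k) \iso \extSalg \otimes_k V^*$ and symmetrically over $T$. Under these identifications, $\binom{\mu^*}{-\nu^*}$ is the $\extRalg$-linear extension of its degree-zero restriction $v^* \mapsto (1 \otimes v^* \circ \mu,\, -1 \otimes v^* \circ \nu)$. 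The degree-zero piece is injective because in the fiber-product data $\mu$ and $\nu$ are surjective, so $v^* \mapsto v^* \circ \mu$ embeds $V^*$ into $\Hom_S(M,k)$. To promote this to all degrees I would use the coproduct structure $\extRalg = \extSalg \amal \extTalg$ from Theorem \ref{introExtIso}: its alternating-word decomposition furnishes canonical bases of $\extRalg$ as a free right $\extSalg$-module and as a free right $\extTalg$-module. A kernel element $\xi \in \extRalg \otimes_k V^*$ is constrained by the vanishing of the first coordinate of $\binom{\mu^*}{-\nu^*}\xi$ after expansion in the right-$\extSalg$-basis, and by the vanishing of the second coordinate after expansion in the right-$\extTalg$-basis; these two sets of constraints together force $\xi = 0$ because the only element of $\extRalg$ that is $\extSalg$-free and $\extTalg$-free of positive-degree contribution is zero.

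The main obstacle I anticipate is the combinatorial bookkeeping needed to run both basis expansions simultaneously and bootstrap the full injectivity from the degree-zero case cleanly. If that direct route becomes delicate, an alternative is to adapt the construction of Section \ref{secComplexConstr} to produce minimal $R$-free resolutions of $V$, $M$, $N$, and $M \times_V N$ fitting into a termwise split short exact sequence that lifts the given module sequence; applying $\Hom_R(-,k)$ and exploiting the vanishing of the induced differentials on the duals of minimal resolutions would then yield the short exact sequence of Ext modules at once, without any case analysis on alternating words.
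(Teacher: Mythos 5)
Your proposal is correct and follows essentially the same route as the paper's proof of Theorem \ref{thmExtFibMod}: pass to the (long) exact sequence of $\extRalg$-modules, reduce to injectivity of $(\mu^*,-\nu^*)$, and use Theorem \ref{extIso} together with the free-product structure of $\extRalg$ to place $\Ker \mu^*$ in the span of elements ending in $\extSalg^{\geq 1}$ and $\Ker \nu^*$ in the span of elements ending in $\extTalg^{\geq 1}$, whose intersection is zero. The only (immaterial) difference is that the paper deduces the kernel containment by composing with $\extMS \to \Ext_S(S^n,k) = k^n$ coming from a free cover of $M$, whereas you use degree-zero injectivity of $\mu^*$ plus freeness of $\extRalg$ over $\extSalg$.
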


In section \ref{secDepth} we study the depth of $\Ext_R(M,k)$ over $\Ext_R(k,k)$ for an $R$-module $M$.
The notion of depth was used in \cite{FHJLT88} to study the homotopy Lie algebras of simply connected
CW complexes, and of local rings.  More recently, Avramov and Veliche \cite{AvrVel05} have shown that small depth of
$\Ext_R(M,k)$ over $\Ext_R(k,k)$ is responsible for significant complications in the structure
of the stable cohomology of $M$ over $R$. For cohomology modules of $R$-modules,
large depth is impossible. 

\begin{introThm}
Let $M$ be an $R$-module.  Then
\begin{equation*}\depth_{\Ext_R(k,k)} \Ext_R(M,k) \leq 1,\end{equation*}
with equality if $M$ is an $S$-module.  In particular, one has $\depth \Ext_R(k,k) = 1$.
\end{introThm}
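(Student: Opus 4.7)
My plan is to treat the case when $M$ is an $S$-module first (by symmetry, the $T$-module case is identical) and then bootstrap to arbitrary $R$-modules via Corollary~B. For $M$ an $S$-module, Theorem~A identifies $\calM := \Ext_R(M,k)$ with $\calR \otimes_\calS V$, where $V := \Ext_S(M,k)$, as graded left $\calR$-modules. The central structural input is the short exact sequence of left $\calR$-modules
\begin{equation*}
0 \to (\calR \otimes_\calS \calS_{\geq 1}) \oplus (\calR \otimes_\calT \calT_{\geq 1}) \to \calR \to k \to 0,
\end{equation*}
which arises because $\calR = \calS \amal \calT$ is free as both a left and a right module over either factor, so the augmentation ideal $\calR_{\geq 1}$ decomposes as a direct sum of left ideals according to whether a free-product word ends in $\calS_{\geq 1}$ or $\calT_{\geq 1}$. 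Applying $\Hom_\calR(-, \calM)$ and tensor-hom adjunction produces a four-term exact sequence computing $\Hom_\calR(k, \calM)$ and $\Ext^1_\calR(k, \calM)$ as the kernel and cokernel of the natural map $\calM \to \Hom_\calS(\calS_{\geq 1}, \calM) \oplus \Hom_\calT(\calT_{\geq 1}, \calM)$.

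To show $\depth_\calR \calM \geq 1$, I would prove the socle-vanishing $\Hom_\calR(k, \calM) = 0$ using a word-length argument. Fix a right-$\calS$-basis $W$ for $\calR$ consisting of all alternating free-product words ending in $\calS_{\geq 1}\cdot\calT_{\geq 1}\cdots$-form---more precisely, in $\calT_{\geq 1}$---together with the empty word, so $\calM = \bigoplus_{w \in W} w \otimes V$ as graded $k$-vector spaces. Given a nonzero $x = \sum_w w \otimes v_w$, pick $w_0 \in W$ of maximal free-product length with $v_{w_0} \neq 0$. If $w_0$ is empty or begins with $\calS_{\geq 1}$, then for any nonzero $t \in \calT_{\geq 1}$ the $tw_0$-component of $tx$ equals $v_{w_0} \neq 0$ (the maximality of $\ell(w_0)$ rules out contributions from any other supporting $w$, since their images under $t\cdot$ land in basis-words of length $\leq \ell_0 < \ell(tw_0)$). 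If instead $w_0$ begins with $\calT_{\geq 1}$, the symmetric argument using $s \in \calS_{\geq 1}$ and the $sw_0$-component of $sx$ yields the analogous conclusion. For the upper bound $\depth_\calR \calM \leq 1$, I would produce a nonzero class in $\Ext^1_\calR(k, \calM)$: the factor-projection of the middle term of the four-term sequence furnishes a surjection $\Ext^1_\calR(k, \calM) \onto \Ext^1_\calS(k, \calM)$, and the nonvanishing of the latter is verified from the right-$\calS$-basis decomposition by noting that elements of the form $t \otimes v$ (with $t \in \calT_{\geq 1}$, $v \in V$ nonzero) lie outside $\calS_{\geq 1} \cdot \calM$, which obstructs extension of a suitable $\calS$-linear map $\calS_{\geq 1} \to \calM$ to all of $\calS$.

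For a general $R$-module $M$, Corollary~B provides the exact sequence $0 \to Y \to \calM \to \calM/\calM^{\geq 2} \to 0$ with $Y = (\shift^{-2}\calR \otimes_\calS \Ext_S(M',k)) \oplus (\shift^{-2}\calR \otimes_\calT \Ext_T(N',k))$ for $\syz^2 M = M' \oplus N'$. The $S$- and $T$-module cases above give $\depth_\calR Y = 1$ when $Y \neq 0$, while $\calM/\calM^{\geq 2}$ contains $\calM^1$ in its socle (since $\calR_{\geq 1}\cdot \calM^1 \subseteq \calM^{\geq 2}$ vanishes in the quotient), so it has depth $0$ whenever $\calM^1 \neq 0$. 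A chase of the long exact sequence for $\Ext^\bullet_\calR(k, -)$ then yields $\depth_\calR \calM \leq 1$, once the sporadic cases $\syz^2 M = 0$ (in which $\calM$ is concentrated in degrees $\leq 1$) and $M$ being $R$-free (in which $\calM = \calM^0$) are handled separately. The final assertion $\depth \calR = 1$ is the special case $M = k$ viewed as an $S$-module. The principal technical hurdle is the socle-vanishing in the $S$-module case, which depends critically on the free-product-word monotonicity under left multiplication and on the case analysis by the starting letter of the maximal-length supporting basis word; a secondary subtlety is exhibiting the nontrivial extension class, for which the concrete right-$\calS$-basis description of $\calR \otimes_\calS V$ is essential.
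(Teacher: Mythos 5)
Your socle-vanishing argument for the lower bound is essentially the paper's own (the paper sorts the terms of $\mu\in\extMR$ by leading letter rather than by maximal word length, then left-multiplies by a degree-one class chosen so that no cancellation occurs), and your four-term exact sequence computing $\Hom_\extRalg(k,\extMR)$ and $\Ext^1_\extRalg(k,\extMR)$ is correct; it is the degree-one piece of the free resolution of $k$ over $\extSalg\amal\extTalg$ from \cite[Example 36.e.2]{FHT01} that the paper also uses. The gap is in your upper bound for $S$-modules. You detect a class in $\Ext^1_\extRalg(k,\extMR)$ by composing with the factor projection $\Ext^1_\extRalg(k,\extMR)\onto\Ext^1_\extSalg(k,\extMR)$ and arguing that the target is nonzero, but the target (and its $\extTalg$-counterpart) can vanish while $\Ext^1_\extRalg(k,\extMR)$ does not. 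Take $S=T=k[x]$ and $M=k$: then $\extSalg=\extTalg=\Lambda(\xi)$, $\extRalg=k\langle\xi,\eta\rangle/(\xi^2,\eta^2)$, and $\extMR=\extRalg$. As a left $\Lambda(\xi)$-module $\extRalg$ is free on the words not beginning with $\xi$, and $\Ext^1_{\Lambda(\xi)}(k,\Lambda(\xi))=0$, so both factor projections land in the zero module; yet $\Ext^1_\extRalg(k,\extRalg)\neq 0$, because the element $(\xi,0)$ of $\Hom_{\Lambda(\xi)}(\Lambda(\xi)^+,\extRalg)\oplus\Hom_{\Lambda(\eta)}(\Lambda(\eta)^+,\extRalg)\cong\mathrm{Ann}_\extRalg(\xi)\oplus\mathrm{Ann}_\extRalg(\eta)$ is not of the form $(\xi r,\eta r)$ for any single $r$. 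In general the cokernel of $\extMR\to A\oplus B$ can be nonzero even when $\extMR\to A$ and $\extMR\to B$ are each surjective: nonvanishing of $\Ext^1_\extRalg(k,\extMR)$ is a joint property of the two components, which is exactly what the paper's cocycles $\phi_{\alpha,\beta}$ capture by assigning different elements $\alpha\neq\beta$ to the two factors so that no single coboundary parameter $\mu$ can serve both. Your candidate obstruction $t\otimes v$ also need not define an $\extSalg$-linear map on $\extSalg^+$ at all: for $\extSalg=\Lambda(\xi)$ the image of $\xi$ must be annihilated by $\xi$, and $t\otimes v$ is not.

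A secondary gap sits in the bootstrap to general $R$-modules: from $0\to Y\to\extLR\to\extLR/\extLR^{\geq 2}\to 0$ with $\depth_\extRalg Y=1$ and $\depth_\extRalg(\extLR/\extLR^{\geq 2})=0$, the long exact sequence alone does not bound $\depth_\extRalg\extLR$ above; you need the connecting map $\Hom_\extRalg(k,\extLR/\extLR^{\geq 2})\to\Ext^1_\extRalg(k,Y)$ to fail to be surjective. The paper extracts this from a quantitative refinement of the $S$-module case, namely that $\Ext^1_\extRalg(k,Y)$ has infinite $k$-rank, or is nonzero in internal degree $1$ where $\Hom_\extRalg(k,\extLR/\extLR^{\geq 2})$ vanishes; your outline would need the same refinement, which the factor-projection argument does not supply.
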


\section{Graded Hilbert series and modules} \label{gradedAlgebras}
In this section, we set notation for the entire article.
Let $k$ be a field.

\begin{subsec}
A $k$-algebra $A$ is \emph{graded} if there is a decomposition of
$A = \bigoplus_{i \in \bbZ} A_i$ as $k$-vector spaces, and for all $i,j \in \bbZ$, one has
$A_i A_j \subseteq A_{i+j}$.  We use both upper and lower indexed graded
objects and adopt the notation $A^i = A_{-i}$.  One says that $A$ is \emph{connected} if $A_0 = K$ and $A_i = 0$ for
$i < 0$ (or equivalently, $A^i = 0$ for $i > 0$).
\end{subsec}
\begin{subsec} \label{gradedModule}
A left module $M$ over a graded $k$-algebra $A$ is \emph{graded} if there is a decomposition
$M = \bigoplus_{i \in \bbZ}M_i$ as $k$-vector spaces, and for all $i,j \in \bbZ$, one has
$A_i M_j \subseteq M_{i+j}$.  
\end{subsec}

\begin{subsec}
Let $M$ be a graded free $K$-module such that $M_i = 0$ for $i \ll 0$ and $\dim_k M_i$ is finite for all $i$.
The Hilbert series of $M$ is the formal Laurent series
\begin{equation*}\hilb{A}{M} = \sum_i \rank_k M_i t^i.\end{equation*}
If $\hilb{A}{M}$ is defined, we say that $M$ has a Hilbert series.  Let $\tens{M}$ denote the tensor
algebra of $M$ over $A$.
\end{subsec}
\begin{lemma} \label{hilbTensor} Let $M$ and $N$ be graded free $A$-modules with Hilbert series.  Then one has an equality
\begin{equation*}\hilb{A}{M \otimes_A N} = \hilb{A}{M}\hilb{A}{N}.\end{equation*}
If $M_i = 0$ for $i \leq 0$, then one also has
\begin{equation*}\hilb{A}{\tens{M}} = \dfrac{1}{1 - \hilb{A}{M}}\end{equation*}
\end{lemma}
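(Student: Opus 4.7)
My plan is to reduce both equalities to direct computations on graded components, using the bounded-below and finite-rank-per-degree hypotheses to keep everything well-defined as formal Laurent or power series.

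For the first statement, I would begin with the standard grading on the tensor product,
\begin{equation*}(M \otimes_A N)_n = \bigoplus_{i+j=n} M_i \otimes_A N_j.\end{equation*}
Because $M_i = 0$ and $N_j = 0$ for $i$, $j$ sufficiently negative, each such direct sum has only finitely many nonzero terms, and the same bound forces $(M \otimes_A N)_n = 0$ for $n \ll 0$. Combined with the finite-rank hypothesis on each $M_i$ and $N_j$, this shows $M \otimes_A N$ again has a Hilbert series. Taking $k$-ranks and using $\rank_k(M_i \otimes N_j) = \rank_k M_i \cdot \rank_k N_j$ on each summand yields
\begin{equation*}\rank_k (M \otimes_A N)_n = \sum_{i+j=n} \rank_k M_i \cdot \rank_k N_j,\end{equation*}
which is precisely the coefficient of $t^n$ in the Cauchy product $\hilb{A}{M}\hilb{A}{N}$. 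Summing over $n$ gives the first identity.

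For the tensor algebra identity, I would induct on the first part: writing $\tens{M} = \bigoplus_{n \geq 0} M^{\otimes n}$ and applying the tensor product formula iteratively gives $\hilb{A}{M^{\otimes n}} = \hilb{A}{M}^n$ for every $n \geq 0$. Summing yields
\begin{equation*}\hilb{A}{\tens{M}} = \sum_{n \geq 0} \hilb{A}{M}^n.\end{equation*}
The hypothesis $M_i = 0$ for $i \leq 0$ forces $\hilb{A}{M}$ to have no constant term, so for each fixed degree only finitely many $\hilb{A}{M}^n$ contribute; this is exactly the condition under which the geometric series sums formally to $1/(1 - \hilb{A}{M})$.

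The main obstacle is purely bookkeeping: ensuring that the finiteness conditions propagate through the direct-sum decompositions so that $M \otimes_A N$ and each $M^{\otimes n}$ genuinely possess Hilbert series, and that the vanishing of $M$ in nonpositive degrees makes the geometric series summable as a formal power series. The algebraic content itself reduces to the grading of the tensor product together with the geometric series identity.
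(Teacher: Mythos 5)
Your argument is correct. For the first identity you and the paper do the same thing: the paper simply declares it ``clear,'' and your Cauchy-product computation on the graded pieces $(M \otimes_A N)_n = \bigoplus_{i+j=n} M_i \otimes_A N_j$ is exactly what that word is hiding (just be careful that the ranks in $\hilb{A}{M}$ are ranks of free $A$-modules, not $k$-dimensions, though the bookkeeping is identical). For the tensor algebra identity, however, you take a genuinely different route. You decompose $\tens{M} = \bigoplus_{n \geq 0} M^{\otimes n}$, apply the product formula to get $\hilb{A}{M}^n$ for each summand, and sum the geometric series, using the vanishing of $M$ in degrees $\leq 0$ to justify formal convergence. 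The paper instead uses the single graded isomorphism $\tens{M} \cong A \oplus (M \otimes_A \tens{M})$, which upon taking Hilbert series yields the functional equation $\hilb{A}{\tens{M}} = 1 + \hilb{A}{M}\,\hilb{A}{\tens{M}}$ and hence the closed form in one step. The two approaches buy different things: the recursion is shorter and sidesteps any explicit discussion of summability (once one knows $\tens{M}$ has a Hilbert series, the equation can simply be solved in the ring of formal power series), while your expansion makes transparent exactly where the hypothesis $M_i = 0$ for $i \leq 0$ is used --- it guarantees that each degree receives contributions from only finitely many tensor powers, which is the honest content of ``the desired result follows.'' Both are complete proofs.
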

\begin{proof}
The first claim is clear. For the second, since $M_i = 0$ for $i \leq 0$, $\tens{M}$ has a Hilbert series.
Furthermore, there is an isomorphism as graded $A$-modules $\tens{M} \cong A \oplus (M \otimes_A \tens{M})$.
The desired result follows.
\end{proof}

\begin{subsec} \label{freeProdAlg}
If $A$ and $B$ are graded connected $k$-algebras, the coproduct of $A$ and $B$ in this category is
the free product of $A$ and $B$, denoted $A \amal B$,  and can be described as follows:  A $k$-basis for $(A \amal B)_n$
consists of all elements of the form
\begin{equation*}
\begin{gathered}
a_1 \otimes b_2 \otimes \cdots \otimes a_l, \quad \phantom{\text{and}} \quad a_1 \otimes b_2 \otimes \cdots \otimes b_m,\\
b_1 \otimes a_2 \otimes \cdots \otimes b_p \quad \text{and} \quad b_1 \otimes a_2 \otimes \cdots \otimes a_q
\end{gathered}
\end{equation*}
where $a_i$ and $b_j$ range over homogeneous bases of $A$ and $B$ respectively, and the total degree of an elementary
tensor(given by summing the degrees of its terms) is $n$.  Multiplication in $A \amal B$ is given by
\begin{equation*}(v \otimes \cdots \otimes x)(y \otimes \cdots \otimes w) = \begin{cases}
                          v \otimes \cdots \otimes xy \otimes \cdots \otimes w &
                              \text{for $x,y \in A$ or $x,y \in B$} \\
                          v \otimes \cdots \otimes x \otimes y \otimes \cdots \otimes w &
                              \text{otherwise.}\end{cases}\end{equation*}
We say that an element of the form $a \otimes \cdots \otimes b$
with $a \in A$ and $b \in B$ starts with $A$ and ends with $B$.  
\end{subsec}

\begin{subsec} \label{freeProdMod}
Let $M$ be a graded left $A$-module, then $(A \sqcup B) \otimes_A M$ is a graded left $A \sqcup B$-module.
A $k$-basis for $((A \sqcup B) \otimes_A M)_n$ consists of all elements of the form 
\begin{equation*}a_1 \otimes b_2 \otimes \cdots \otimes b_p \otimes m_{p+1} \quad\text{and} \quad b_1 \otimes a_2 \otimes \cdots \otimes b_q \otimes m_{q+1}\end{equation*}
where $a_i$, $b_j$, and $m_k$ range over homogeneous bases of $A$, $B$ and $M$ respectively, and the total degree of an elementary tensor
is $n$.  The action of $A \sqcup B$ is given by
\begin{equation*}(v \otimes \cdots \otimes x)(y \otimes \cdots \otimes w) = \begin{cases}
                          v \otimes \cdots \otimes xy \otimes \cdots \otimes w &
                             \text{for $x,y \in A$ or $x,y \in B$} \\
                          v \otimes \cdots \otimes xw &
                             \text{for $x \in A$ and $y \in P$} \\
                          v \otimes \cdots \otimes x \otimes y \otimes \cdots \otimes w &
                             \text{otherwise.}\end{cases}\end{equation*}
\end{subsec}
\begin{lemma} \label{hilbLemma}
Let $A$ and $B$ be graded conected $k$-algebras and $M$ a graded left $A$-module.  There is an equality of Hilbert series
\begin{equation*}\hilb{k}{\left((A \sqcup B) \otimes_A M\right)} = \dfrac{\hilb{k}{M}\hilb{k}{B}}{\hilb{k}{A} + \hilb{k}{B} - \hilb{k}{A} \hilb{k}{B}}.\end{equation*}
\end{lemma}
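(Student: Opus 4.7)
The plan is to read off a $k$-vector space decomposition of $(A \sqcup B) \otimes_A M$ from the explicit basis in \ref{freeProdMod} and then evaluate two geometric series. Set $\alpha = \hilb{k}{A}$, $\beta = \hilb{k}{B}$, and $\mu = \hilb{k}{M}$; let $\bar\alpha = \alpha - 1$ and $\bar\beta = \beta - 1$ be the Hilbert series of the augmentation ideals $\bar A = A_{\geq 1}$ and $\bar B = B_{\geq 1}$. Since $A$ and $B$ are connected, $\bar\alpha$ and $\bar\beta$ have zero constant term, so any formal inverse encountered below makes sense as an ordinary power series.

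The next step is to group the basis vectors of $(A \sqcup B) \otimes_A M$ listed in \ref{freeProdMod} by the shape of the alternating algebra factor that precedes the final $m$-entry. Once the homogeneous bases of $A$ and $B$ are taken from their augmentation ideals, so that the identity appears only as $1_{A \sqcup B}$, every basis vector falls into exactly one of three disjoint families: the pure element $1 \otimes m$; an odd-length alternating word $b_1 \otimes a_2 \otimes \cdots \otimes b_{2r+1}$ starting and ending in $\bar B$, tensored with $m$; or an even-length word $a_1 \otimes b_2 \otimes \cdots \otimes b_{2r}$ starting in $\bar A$ and ending in $\bar B$, tensored with $m$. Since the Hilbert series multiplies under $\otimes_k$ by Lemma \ref{hilbTensor}, summing the three contributions produces
\begin{equation*}
\hilb{k}{(A \sqcup B) \otimes_A M} \;=\; \mu\,\Bigl(1 + \sum_{r \ge 0} \bar\beta(\bar\alpha\bar\beta)^r + \sum_{r \ge 1}(\bar\alpha\bar\beta)^r\Bigr) \;=\; \frac{\mu\,(1+\bar\beta)}{1-\bar\alpha\bar\beta}.
\end{equation*}

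The proof then concludes by substituting $1 + \bar\beta = \beta$ in the numerator and $1 - \bar\alpha\bar\beta = \alpha + \beta - \alpha\beta$ in the denominator. The only delicate point is bookkeeping---verifying that the empty word is counted exactly once and that the odd-length and even-length families do not overlap---both of which are immediate from the alternating structure of the basis of $A \sqcup B$ recalled in \ref{freeProdAlg}.
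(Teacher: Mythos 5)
Your proof is correct and follows essentially the same route as the paper: both read the $k$-basis of $(A \sqcup B) \otimes_A M$ off from \ref{freeProdAlg} and \ref{freeProdMod} and compute the resulting Hilbert series. The paper merely packages your three families of alternating words as the single isomorphism $(A \sqcup B)\otimes_A M \cong \left(B \otimes_k \tens{A_+ \otimes_k B_+}\right)\otimes_k M$ and invokes Lemma \ref{hilbTensor} for the tensor algebra factor, which is exactly the geometric series $\sum_{r}(\bar\alpha\bar\beta)^r$ you sum by hand.
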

\begin{proof}
The basis given in \ref{freeProdAlg} shows there is an isomorphism of $k$-vector spaces
\[A \sqcup B \cong B \otimes_k \tens{A_+ \otimes_k B_+} \otimes_k A.\]
Tensoring over $A$ with $M$ on the right gives
\begin{equation*}
(A \sqcup B)\otimes_A M  \cong \left(B \otimes_k \tens{A_+ \otimes_k B_+}\right)\otimes_k M.
\end{equation*}
A computation of Hilbert series gives the desired equality.
\end{proof}

\section{Resolutions over a fiber product} \label{secComplexConstr}

We consider a diagram of homomorphisms of rings
\begin{equation}
\xymatrix{S \times_{k} T \ar[r]^-{\tau} \ar[d]_{\sigma} & T \ar[d]^{\pi_T} \\
            S \ar[r]_{\pi_S} & k}\end{equation}
where $\pi_S$ and $\pi_T$ are surjective, and $S \times_k T$ is the fiber product:
\begin{equation*}S \times_k T = \{(s,t) \in S \times T~\colon~\pi_S(s) = \pi_T(t)\}.\end{equation*}
We set $\p = \Ker \pi_S$, $\q = \Ker \pi_T$, and $\m = \Ker \pi_S\sigma = \Ker \pi_T\tau$.  One
then has $\m = \p \oplus \q$ and we identify $\p$ and $\q$ with subsets of $R$.
Every $S$-module is considered an $R$-module via $\sigma$, and similarly for $T$-modules.  

\begin{subsec} \label{ringSettings}
In the sequel we assume that we are in one of the following situations:
\begin{itemize}
\item $S$ and $T$ are commutative, noetherian, local rings with common residue field $k$, $\p$ and $\q$ the
  maximal ideals of $S$ and $T$ respectively, and $M$ is a finitely generated $S$-module; or
\item $S$ and $T$ are non-negatively graded, connected, degree-wise finite $k$-algebras, $\p = S_+$, $\q = T_+$,
  and $M$ is a graded, bounded below, degree-wise finite $S$-module.
\end{itemize}
\end{subsec}

For a ring $A$ as in \ref{ringSettings}, we say that a complex of free $A$-modules $X$ is \emph{minimal} if it satisfies
$\del(X) \subseteq \m X$, where $\m$ is the (homogeneous) maximal ideal of $A$.  Note that every $S$-module $M$
we consider has a minimal free resolution in which each free module is finitely generated.

If $L$ is an $A$-module, the Poincar\'e series of $L$ over $A$ is the formal power series
\begin{equation*}P^A_L(t) := \sum_i \dim_k \Ext_A^i(L,k)\,t^i.\end{equation*}
Thus the coefficient of $t^i$ is the rank of the $i^\text{th}$ free module in a minimal resolution of $L$ over $A$, when one exists.

\begin{definition} \label{basedDefn}
Let $A$ be a ring and $\bas X$ a graded set, $\bas X = \bigsqcup_{n \geq 0} \bas X_n$.  
We let $\free AX$ denote the graded free left $A$-module with basis $\bas X_n$ in degree $n$, and set
$\free AX_n = 0$ when $\bas X_n = \emptyset$.  We call $\free AX$ a \emph{graded based module} over $A$ with basis $\bas X$.
Homomorphisms of based modules are identified with their matrices in the chosen bases.

For a based module $\free AX$, we identify $A \otimes_A \free AX$ and $\free AX$ by means of the canonical isomorphism.
We use $\freemult A{XY}$ to denote the graded based $A$-module
$\free AX \otimes_A \free AY$ with graded basis $\bas {XY} = \bigsqcup_n [{\bas{XY}}]_n$, where 
$[{\bas {XY}}]_n$ is the set of symbols
\begin{equation*}\{xy \mid x \in {\bas X}_i, y \in {\bas Y}_j, i+j = n\}.\end{equation*}
\end{definition}

\begin{construction} \label{complConstr}
Let $M$ be an $S$-module.  Let $P \to M$ and $E \to k$ be free resolutions of $M$, respectively $k$, over
$S$, and let $T \to k$ be a free resolution of $k$ over $T$ such that $E_0 = S$ and $F_0 = T$.
Choose bases $\bas P$, $\bas E$, and $\bas F$ of the graded modules $P$, $E$, and $F$ over $S$, $S$ and $T$, respectively,
so that $\bas E_0 = \{1_S\}$ and $\bas F_0 = \{1_T\}$.
Consider the elements of $\bas P$, $\bas E_{\geq 1}$ and $\bas F_{\geq 1}$ as letters of an alphabet.
The degree of a word in this alphabet is defined to be the sum of the degrees of its letters.

Let $\bas G$ be the set of all words of the form
\begin{equation*}\{f_1e_2f_3\cdots e_{2l-2}f_{2l-1}p_{2l}\}\quad\text{and}\quad\{e_1f_2e_3\cdots e_{2l-1}f_{2l}p_{2l+1}\}\end{equation*}
where $e_i,f_i$ and $p_i$ range over $\bas E_{\geq 1}, \bas F_{\geq 1}$ and $\bas P$ respectively, and $l \geq 0$.
Form the free graded $R$-module $G = \free RG$.

Every word $w \in \bas G$ has the form $xw'$ for some letter $x$ and a (possibly empty) word $w'$.
Assume that one has $\del(E) \subseteq \p E$, $\del(P) \subseteq \p P$, $\del(F) \subseteq \q F$, and set
\begin{equation*}\del^{G}(w) = \begin{cases}
            \del^{P}(x)   & \text{for}~x \in \bas P \\
            \del^{E}(x)w' & \text{for}~x \in \bas E \\
            \del^{F}(x)w' & \text{for}~x \in \bas F, \\
            \end{cases}\end{equation*}
and extend $\del^G$ to a endomorphism of $G$ by $R$-linearity.  
Set $\del^G_i = \del|_{G_i}$.
We remark that a matrix $\varphi$ with entries in $\p$ defines a homomorphism $^{S}\varphi$ of free $S$-modules,
as well as a homomorphism $^{R}\varphi$ of free $R$-modules. 
\end{construction}

\begin{remark} \label{gDiffDiagram}
The first few degrees of the complex $G$ in Construction \ref{complConstr} looks as follows:
\begin{equation*}\xymatrix@R=-.4pc@C=3.5pc{
\free RP_3 \ar@<+.7ex>[dr] & & & \\
 & \free RP_2 \ar[ddr] & & \\
\freemult R{F_1P_2} \ar[ur] & & & \\
 & & \free RP_1 \ar[ddddr] & \\
\freemult R{E_1F_1P_1} \ar[dr] & & \\
 & \freemult R{F_1P_1} \ar[uur] & & \\
\freemult R{F_2P_1} \ar[ur] & & & \\
 & & & \free RP_0 \\
\freemult R{E_2F_1P_0} \ar[dr] & & & \\
 & \freemult R{E_1F_1P_0} \ar[ddr] & & \\
\freemult R{F_1E_1F_1P_0} \ar[ur] & & & \\
 & & \freemult R{F_1P_0} \ar[uuuur] & \\
\freemult R{E_1F_2P_0} \ar[dr] & & & \\
 & \freemult R{F_2P_0} \ar[uur] & & \\
\freemult R{F_3P_0} \ar@<-.5ex>[ur] & & &
}\end{equation*}      
Note that each map in the diagram acts on the leftmost letter of a word.  \end{remark}

\begin{remark} \label{gradedRemark}
Assume that we are in the graded situation of \ref{ringSettings}.
If $P$, $E$ and $F$ are complexes of graded $S$, $S$ and $T$-modules respectively,
then $G$ is a complex of graded $R$-modules; the
internal degree of a word is the sum of the internal degrees of the letters in the word.
\end{remark}

\begin{theorem} \label{reslnThm}
Assume that:
\begin{itemize}
\item $M$ is an $S$-module with a minimal free resolution $P$,  
\item $S/\p$ has a minimal free resolution $E$,
\item $T/\q$ has a minimal free resolution $F$.
\end{itemize}
The maps of free modules $\del^G_i$ defined in Construction \ref{complConstr}
\begin{equation*}G \colon \cdots \to G_i \xra{\del^G_i} G_{i-1} \to \cdots \to G_1 \xra{\del^G_1} G_0 \lto 0,\end{equation*}
give a free resolution of the $R$-module $M$ and satisfies $\del^G(G) \subseteq \m(G)$.
\end{theorem}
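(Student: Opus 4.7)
Three things must be checked: that $G$ is a complex, that it is minimal, and that the augmentation $G \to M$ is a quasi-isomorphism. I would address them in that order. Minimality is immediate, since $\del^P, \del^E, \del^F$ have coefficients in $\p, \p, \q$ respectively, all contained in $\m$, and $\del^G$ inherits these coefficients.

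To verify $(\del^G)^2 = 0$ I would perform case analysis on the leftmost letter $x$ of a basis word $w = xw'$. If $x \in \bas P$ then $w = x$ and the assertion reduces to $(\del^P)^2 = 0$. If $x \in \bas E_{\geq 1}$, expand $\del^E(x) = \sum_i s_i e_i$ with $s_i \in \p$ and $e_i$ ranging over the basis of $E$ (possibly including $1_S$). Applying $\del^G$ again, the terms with $e_i \in \bas E_{\geq 1}$ reassemble to $(\del^E)^2(x) \cdot w' = 0$. The augmented term (coming from $e_i = 1_S$) contributes $s_i \cdot \del^G(w')$; the alternation of $\bas G$ forces the leftmost letter of $w'$ to lie in $\bas F_{\geq 1}$, so $\del^G(w')$ has coefficients in $\q$, and $\p \cdot \q = 0$ in $R$ annihilates this term. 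The case $x \in \bas F_{\geq 1}$ is symmetric, via $\q \cdot \p = 0$. Thus the identity $\p \q = 0$ in the fiber product, combined with the alternation of $\bas G$, drives $(\del^G)^2 = 0$.

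For $H_0(G) = M$ a direct calculation works: the image of $\del^G_1$ restricted to $\freemult R{F_1 P_0}$ is $R \cdot \q \cdot \bas P_0 = \q \cdot \bas P_0$ (since $\q$ is a two-sided ideal of $R$), so quotienting $G_0 = R \cdot \bas P_0$ by this yields $(R/\q) \cdot \bas P_0 = S \cdot \bas P_0 = P_0$. The remaining contribution to $\im(\del^G_1)$ comes from $\free R{P_1}$ and reduces to $\del^P(P_1) \subseteq P_0$, whose cokernel is $M$.

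\textbf{The main obstacle} is showing $H_n(G) = 0$ for $n \geq 1$. My plan is to introduce the increasing filtration $\{F_k G\}_{k \geq 0}$, where $F_k G$ is spanned by basis words whose prefix (the letters preceding the terminal $\bas P$-letter) has length at most $k$; this filtration is exhaustive and finite in each homological degree. It is preserved by $\del^G$, since the non-augmented parts of $\del^E, \del^F$ and all of $\del^P$ keep the prefix length constant, while the augmented $1_S, 1_T$ terms drop it by exactly one. On the associated graded $G^{(k)} = F_k G / F_{k-1} G$ with $k \geq 1$ the induced differential acts only on the leftmost letter via the quotient complexes $E/E_0$ or $F/F_0$, whose homologies are concentrated in degree one and equal to $\p$ and $\q$ respectively (from the long exact sequences of $0 \to E_0 \to E \to E/E_0 \to 0$ and its $F$-analogue). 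The $d^1$-differentials of the associated spectral sequence encode the dropped augmented terms, and tracking them should collapse the spectral sequence to $M$ in total degree zero. The principal technical task is the careful bookkeeping for this collapse; no new algebraic input beyond the identity $\p \q = 0$ and the acyclicity of $P, E, F$ is expected.
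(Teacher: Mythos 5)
Your verification that $G$ is a complex, that it is minimal, and that $H_0(G)\cong M$ is correct and essentially matches the paper's argument (the paper also checks $(\del^G)^2=0$ by cases on the leftmost letter, using $\p\q=0$ when that letter has degree one). The gap is in the one step that carries all the weight: acyclicity in positive degrees. You filter by prefix length and assert that the associated graded strands are the complexes $E/E_0$ and $F/F_0$, with homology concentrated in degree one and equal to $\p$, respectively $\q$. That computation is valid over $S$ (respectively $T$), but the strands of $G$ are complexes of \emph{free $R$-modules} whose differentials are merely given by the same matrices, with entries in $\p$ (respectively $\q$), reinterpreted in $R$; their homology over $R$ is genuinely larger. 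For instance, on an $E$-strand $\free R{E_{\geq 1}}\otimes v$ every element of $\q\cdot\free R{E_i}\otimes v$ is a cycle (the entries of $\del^E$ lie in $\p$ and $\q\p=0$), while the image of $\del_{i+1}$ lies in $\p\cdot\free R{E_i}\otimes v$ and $\p^{b}\cap\q^{b}=0$ in a free module; hence $H_i$ of the strand contains a copy of $\q^{\,\rank E_i}\otimes v$ in \emph{every} degree $i\geq 1$, not just degree one. Your $E^1$ page is therefore misidentified, and the ``bookkeeping'' you defer is exactly the substance of the theorem.

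The paper disposes of precisely these extra classes by hand: after splitting a cycle according to its starting letter and using $\p G_{i-1}\cap\q G_{i-1}=0$ to reduce to a single strand $\freemultword R{E}{w}$, it shows (i) the coefficients of a cycle lie in $\m=\p\oplus\q$, by pushing the strand down to $E$; (ii) the $\q$-part is a boundary, because $\del(few)=\del^F(f)\,ew$ and the images of $\bas F_1$ generate $\q$, so $\q\,ew$ consists of boundaries coming from one prefix length higher; and (iii) the $\p$-part is a boundary, because $\p\,\freemultword R{E}{w}\cong\p E$ as complexes. A filtration or spectral-sequence argument could certainly be organized around these same three facts --- your $d^1$ out of the $\bas F_1$-strands is what must kill the spurious $\q$-classes --- but as written your plan rests on an incorrect computation of the associated graded homology, and the collapse you expect to be routine is where all the work lies.
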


The following corollary was first obtained in \cite{KosSha57} for $M = k$ and in \cite[Theorem 1]{DreKra75} in general.

\begin{corollary} \label{DreKraResult}
There is an equality of Poincar\'e series:
\begin{equation*}P^R_M(t) = \dfrac{P^S_M(t)P^T_k(t)}{P^S_k(t) + P^T_k(t) - P^S_k(t)P^T_k(t)}.\end{equation*}
\end{corollary}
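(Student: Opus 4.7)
The plan is to read off $P^R_M(t)$ from the minimal $R$-free resolution $G$ produced by Theorem \ref{reslnThm}. Since $G$ is minimal and each $G_n$ is $R$-free on the indexing set $\bas G_n$, one has
\[P^R_M(t) = \sum_n \rank_R G_n \cdot t^n = \sum_n |\bas G_n| \cdot t^n,\]
so the whole problem reduces to enumerating the words of $\bas G$ by total degree.

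First I would observe that every basis word in $\bas G$ has the shape $w\,p$ with $p \in \bas P$ and $w$ an alternating word in the letters of $\bas E_{\geq 1} \sqcup \bas F_{\geq 1}$ that is either empty or ends with a letter of $\bas F_{\geq 1}$; this unifies the two families listed in Construction \ref{complConstr}. Because degree is additive over the letters of a word, the generating function factors as $W(t) \cdot P^S_M(t)$, where $W(t)$ enumerates the admissible alternating prefixes and $P^S_M(t) = \sum_n \rank_S P_n \cdot t^n$ is recovered from the minimality of $P$.

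To compute $W(t)$, I would set $a = P^S_k(t) - 1$ and $b = P^T_k(t) - 1$, which enumerate the letters of $\bas E_{\geq 1}$ and $\bas F_{\geq 1}$ respectively, and split the admissible prefixes by their last letter. Writing $U$ and $V$ for the generating functions of nonempty alternating words ending in $\bas F_{\geq 1}$ and in $\bas E_{\geq 1}$, the decomposition yields the linear system $U = b(1+V)$, $V = a(1+U)$, whose solution gives $W(t) = 1 + U = (1+b)/(1-ab)$. Substituting back and rewriting the denominator via $1 - ab = P^S_k + P^T_k - P^S_k P^T_k$ produces
\[W(t) = \frac{P^T_k(t)}{P^S_k(t) + P^T_k(t) - P^S_k(t) P^T_k(t)},\]
and multiplying by $P^S_M(t)$ gives the stated formula.

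The only real obstacle is the mild combinatorial bookkeeping needed to check that the two indexed families of Construction \ref{complConstr} together account for exactly the admissible words "empty, or alternating and ending in $\bas F_{\geq 1}$, then a letter of $\bas P$." Once this identification is made, the remainder is a routine manipulation of rational power series.
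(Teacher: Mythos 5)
Your proof is correct and takes essentially the same approach as the paper: both arguments use the minimality of $G$ from Theorem \ref{reslnThm} and reduce the claim to enumerating the basis words of $\bas G$ by total degree, arriving at the denominator $1-(P^S_k-1)(P^T_k-1)=P^S_k+P^T_k-P^S_kP^T_k$. The only difference is bookkeeping: the paper identifies $\bas G$ with the basis of $\free{k}{F} \otimes_k \tens{\free{k}{E}_{\geq 1}\otimes_k\free{k}{F}_{\geq 1}} \otimes_k \free{k}{P}$ and applies Lemma \ref{hilbTensor}, whereas you solve the equivalent linear system $U=b(1+V)$, $V=a(1+U)$ for the alternating prefixes.
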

\begin{proof}
One may describe the basis $\bas G$ in Construction \ref{complConstr} as
a basis of $k$-vector space $\free kF \otimes_R \tens{\free kE_{\geq 1}\otimes_k\free kF_{\geq 1}} \otimes_k \free kP$.
Therefore, Lemma \ref{hilbTensor} gives
\begin{equation*}\hilb{R}{G} = \frac{\hilb{R}{P}\hilb{R}{F}}{1 - (\hilb{R}{E} - 1)(\hilb{R}{F} - 1)}.\end{equation*}
The resolutions used in Construction \ref{complConstr} are minimial, so one has
$\hilb{R}{G} = P^R_M(t)$, $\hilb{R}{E} = P^S_k(t)$, etc.  Thus, the formula above gives the desired equality.
\end{proof}

\begin{proof}[Proof of Theorem \ref{reslnThm}] To show that $G$ is a complex,
let $w$ be a word of degree $i$, with $i \geq 2$.  Suppose $w = xyw'$ where $w'$
is a word, $x$ is a letter of degree 1 and $y$ is an arbitrary letter.  For $x \in \bas E$ and $y \in \bas F$ one has
\begin{equation*}\del^2(w) = \del(\del^E(x)yw') \in \del(\p yw') = \p\del(yw') = \p\del^F(y)w' \subseteq \p\q w' = 0.\end{equation*}
The cases with $x \in \bas F_1$ and $y \in \bas P$, and with $x \in \bas F_1$ and $y \in \bas E$ are similar.
If $w = xw'$ where $x$ is a letter of degree greater than or equal to 2, then $\del^2(w) = 0$, since
$\free RP, \free RE$, and $\free RF$ are complexes of $R$-modules, and hence one has $\del^2(x) = 0$.

Let $\freemult R{E_{\geq 2}G}$ denote the $R$-linear span of words whose first letter is in $\bas E_i$ for
some $i \geq 2$, see Definition \ref{basedDefn}.  Let $\freemult R{F_1E_{\geq 1}G}$ denote the span of words
starting with a letter from $\bas F_1$, followed by a letter from $\bas E$.  Symbols such as
$\freemult R{F_{\geq 2}G}, \freemult R{E_1F_{\geq 1}G}$, etc\. are defined similarly.

Let $a$ be an element of $G_i$, with $i \geq 1$.  It has a unique expression
\begin{gather*}
a = (x + x') + (y + y') + (z + z') \quad \text{where} \\
x \in \freemult R{E_{\geq 2}G},  y \in \freemult R{F_{\geq 2}G}, z \in \free RP, \\
x' \in \freemult R{F_1E_{\geq 1}G}, y' \in \freemult R{E_1F_{\geq 1}G}, z' \in \freemult R{F_1P}.
\end{gather*}

Notice that one has
\begin{equation*}\del(x + x') \in \freemult R{E_{\geq 1}G}, \quad \del(y + y') \in \freemult R{F_{\geq 1}G}, \quad \del(z + z') \in \free RP.\end{equation*}
Thus, $\del(a) = 0$ implies that each one of $x + x', y + y'$ and $z + z'$ is a cycle.  Next, 
we show each is a boundary,  by giving details for $x + x'$; the other cases are similar.

Since one has $\p  \cap \q  = 0$ in $R$, and $G_{i-1}$ is a free
$R$-module, it follows that $\p (G_{i-1}) \cap \q (G_{i-1}) = 0$.
Therefore, $\del(x + x') = 0$ implies that $x$ and $x'$ are cycles as well.  Let $l(w)$ denote the leftmost letter in the word $w$.
We may express $x$ according to the decomposition
\begin{equation*}\freemult R{E_{\geq 2}G}_i = \bigoplus\limits_{\substack{2 \leq j \leq i \\ w \in \bas G_{i-j} \\ l(w) \in \bas F}} \freemultword R{E_j}{w}.\end{equation*}
If $w$ is basis element of degree $i-j$ with $l(w) \in F$ and $2 \leq j \leq i$,
then one has $\del(\freemultword R{E_j}{w}) \subseteq \freemultword R{E_{j-1}}{w}$.
Hence, each component of $x$ in the decomposition above is a cycle.  For similar reasons the components of $x'$
in $\freemult R{F_1E_{\geq 1}G}$ are cycles.

Therefore, it is enough to show that every cycle of the form
\begin{gather*}
x = \sum\limits_{e \in \bas E_j} r_e ew \in \freemultword R{E_j}{w} \quad\text{or} \\
x' = \sum\limits_{f \in \bas F_1} \sum\limits_{e \in \bas E_{j-1}} r_{fe} few \in \freemultword{R}{F_1E_{j-1}}{w}
\end{gather*}
where $w$ is a fixed word, and $r_e, r_{ef}$ are in $R$, is a boundary.  We give details for $x$, the other case is similar.

We first show that $r_e \in \m$ for each $e \in \bas E_j$.  Indeed, there is a commutative diagram of $R$-modules
\begin{equation} \label{ewstrand} \xymatrix{
\cdots \ar[r] & \freemultword R{E_3}{w} \ar[r] \ar[d]^{\til{\sigma}_3} & \freemultword R{E_2}{w} \ar[r] \ar[d]^{\til{\sigma}_2} &
   \freemultword R{E_1}{w} \ar[r] \ar[d]^{\til{\sigma}_1} & Rw \ar[r] \ar[d]^{\til{\sigma}_0} & 0 \\
\cdots \ar[r] & E_3 \ar[r] & E_2 \ar[r] & E_1 \ar[r] & E_0 \ar[r] & 0
}
\end{equation}
where the vertical maps send $\sum_{e \in \bas E_j} r_e ew$ to $\sum_{e \in \bas E_j} \sigma(r_e) e$.
The image of $x$ in $E$ is a cycle, and hence a boundary, of $E$.  As $E$ is minimal, the claim follows.

Suppose $t = \del(f)$ for $f \in \bas F_1$.  Then $tew = \del(few)$ is a boundary of $G$.  As $F$ is
a resolution of $k$ over $T$, the images of $\bas F_1$ form a minimal generating set for $\q$.  Hence $\q ew$ consists
entirely of boundaries.

The claims above show it suffices to prove the theorem when the coefficients are in $\p$.
In diagram \eqref{ewstrand}, we may also define a morphism of complexes $\til{\gamma} : \p E \to \p \freemultword R{E}{w}$
by sending $\sum_{e \in \bas E_j} s_e e$ to $\sum_{e \in \bas E_j} s_e ew$, viewing $s_e \in \p \subset R$.  Note that
$\til{\gamma}$ and $\til{\sigma}\mid_{\p \freemultword R{E}{w}}$ are inverses of one another.

Suppose that $x \in \p \freemultword R{E}{w}$ is a cycle.  Then $\til{\sigma}(x)$ is a cycle
in $E$.  Hence there exists $u$ so that $\del^E(u) = \til{\sigma}(x)$.  Then one has
\begin{equation*}
\del(\til{\gamma}(u)) = \til{\gamma}(\del^E(u)) = \til{\gamma}(\til{\sigma}(x)) = x.\qedhere
\end{equation*}
\end{proof}

Two special cases of the theorem are used in section \ref{secYonAlg}.

\begin{example}  \label{fieldResln}
When $M = k$, we can take $\bas P = \bas E$.  Let $D$ be the resolution given by Theorem \ref{reslnThm}.  
Since $\bas P_0 = \{1\}$, we can also replace all basis elements of the form $w1$ with a basis element $w$ of the same degree,
and set $\del^{D}(1_R) = 0$.  Therefore, in low degrees, $D$ has the form
\begin{equation*}
\xymatrix@R=-.4pc@C=3.5pc{
& \free RE_3 \ar@<+.7ex>[dr] & & & & \\
&  & \free RE_2 \ar[ddr] & & & \\
& \freemult R{F_1E_2} \ar[ur] & & & & \\
&  & & \free RE_1 \ar[ddddr] & & \\
& \freemult R{E_1F_1E_1} \ar[dr] & & & \\
&  & \freemult R{F_1E_1} \ar[uur] & & & \\
& \freemult R{F_2E_1} \ar[ur] & & & & \\
\ar@{.} '[rrrr] [rrrrr] & & & & R & \\
& \freemult R{E_2F_1} \ar[dr] & & & & \\
&  & \freemult R{E_1F_1} \ar[ddr] & & & \\
& \freemult R{F_1E_1F_1} \ar[ur] & & & & \\
& & & \free RF_1 \ar[uuuur] & & \\
& \freemult R{E_1F_2} \ar[dr] & & & & \\
& & \free RF_2 \ar[uur] & & & \\
& \free RF_3 \ar@<-.5ex>[ur] & & & & 
}
\end{equation*}
\end{example}

\begin{example} \label{ringResln}
When $M = T$, applying the theorem with the roles of $S$ and $T$ reversed, one has
$P_i = 0$ for $i \neq 0$, and $\bas P_0 = \{1\}$.
Let $C$ be the resolution given by Theorem \ref{reslnThm}.  Letting $w$ denote the basis element $w1$ as above, we
see that $C$ is given by the top half of the diagram in Example \ref{fieldResln}.
\end{example}

\section{The Yoneda algebra} \label{secYonAlg}

Let $\extRalg$, $\extSalg$ and $\extTalg$ denote the Ext algebras of
$R$, $S$, and $T$ respectively.  For an $S$-module $M$, we let $\extMS$ be the graded left $\extSalg$-module $\Ext_S(M,k)$,
and let $\extMR$ be the graded left $\extRalg$-module $\Ext_R(M,k)$.

The functor $\Ext_{-}(k,k)$ applied to the diagram of homomorphisms of rings
\begin{equation*}
\xymatrix{R \ar[r]^-{\tau} \ar[d]_-{\sigma} & T \ar[d]^-{\pi_T} \\ S \ar[r]_-{\pi_S} & k} \quad
\substack{ \phantom{\text{\normalsize a}} \\ \text{\normalsize induces a diagram} \\ \text{\normalsize of graded algebras}} \quad
\xymatrix{\extRalg & \extTalg \ar[l]_-{\tau^*} \\ \extSalg \ar[u]^-{\sigma^*} & k \ar[u]_-{\pi_S^*} \ar[l]^-{\pi_T^*}}
\end{equation*}
and hence defines a unique homomorphism of graded $k$-algebras
\begin{equation*}
\phi : \extSalg \amal \extTalg \to \extRalg
\end{equation*}

\begin{theorem} \label{extAlgIso}
The homomorphism of connected $k$-algebras $\phi$ is an isomorphism.
\end{theorem}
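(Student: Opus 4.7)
My plan is to reduce the problem to showing that $\phi$ is surjective, and then to exhibit a basis of $\extRalg$ that visibly lies in its image using the explicit resolution from Example \ref{fieldResln}. The first half is easy: applying Lemma \ref{hilbLemma} with $M = \extSalg$ as a module over itself gives
\begin{equation*}
\hilb{k}{\extSalg \amal \extTalg} = \frac{\hilb{k}{\extSalg}\hilb{k}{\extTalg}}{\hilb{k}{\extSalg} + \hilb{k}{\extTalg} - \hilb{k}{\extSalg}\hilb{k}{\extTalg}} = \frac{P^S_k(t)P^T_k(t)}{P^S_k(t) + P^T_k(t) - P^S_k(t)P^T_k(t)},
\end{equation*}
and by Corollary \ref{DreKraResult} (with $M = k$) this equals $P^R_k(t) = \hilb{k}{\extRalg}$. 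Since both algebras are degree-wise finite dimensional and agree in each degree, surjectivity of $\phi$ will force it to be an isomorphism.

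For surjectivity, I would use the minimal resolution $D$ of $k$ over $R$ described in Example \ref{fieldResln}. Its basis $\bas D$ consists of alternating words in letters from $\bas E_{\geq 1}$ and $\bas F_{\geq 1}$, together with $1_R \in \bas D_0$, and the differential acts on the leftmost letter. The dual basis $\{w^*\}_{w \in \bas D}$ is therefore a $k$-basis of $\extRalg$. For each $e \in \bas E_{\geq 1}$ (respectively $f \in \bas F_{\geq 1}$), the cochain $e^* \in \Hom_S(E,k)$ is a cocycle representing a class in $\extSalg$, and by naturality $\sigma^*$ sends this class to the class of the one-letter word $e \in \bas D$; similarly for $\tau^*$ on $f^*$. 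Since $\extSalg$ is generated as a $k$-algebra (under the Yoneda product) by classes of $\bas E_{\geq 1}$ together with the field elements, and likewise for $\extTalg$, it will suffice to show that the Yoneda product of the images
$$\sigma^*(e_1^*),\ \tau^*(f_2^*),\ \sigma^*(e_3^*),\ \ldots$$
in $\extRalg$ equals the dual class of the word $e_1 f_2 e_3 \cdots$ in $\bas D$, up to lower-order terms already in the image of $\phi$.

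The core computation I would carry out is the chain-level Yoneda product. Given a letter $x \in \bas E_{\geq 1} \cup \bas F_{\geq 1}$ of degree $n$, the cocycle $x^* \in \Hom_R(D_n, k)$ admits a particularly clean chain-map lift $\widetilde{x^*} : D \to \shift^{-n} D$: on a basis word $w$, set $\widetilde{x^*}(w) = w'$ whenever $w = xw'$ starts with $x$ and send all other basis words to $0$. One checks directly from the description of the differential (which acts on the leftmost letter and stays within the $\p$-linear or $\q$-linear span accordingly) that $\widetilde{x^*}$ is a chain map, modulo correction terms supported on words whose leftmost letter lies in $\bas E_j$ or $\bas F_j$ with $j \geq 2$; these correction terms encode the multiplicative structure of $\extSalg$ or $\extTalg$ itself. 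Iterating this lift, the Yoneda product of $\sigma^*(e_1^*), \tau^*(f_2^*), \ldots, \sigma^*(e_{2l-1}^*)$ pairs to $1$ on the basis word $e_1 f_2 \cdots e_{2l-1}$ and to $0$ on any word not having this as a prefix; the only additional contributions arise from words that share a strictly shorter alternating prefix, and these lie in the image of $\phi$ by an induction on word length.

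The main obstacle is precisely this chain-level verification: one must pin down the correction terms to check that the composition of lifts actually represents the correct Yoneda class, and then argue by induction on the length of the alternating word that every element $w^*$ for $w \in \bas D$ lies in $\im \phi$. Once this is done, surjectivity of $\phi$ combined with the Hilbert series comparison from the first paragraph completes the proof.
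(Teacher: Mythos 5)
Your reduction to surjectivity via the Hilbert series comparison is exactly the paper's first step, and your identification of the one-letter generators as images under $\sigma^*$ and $\tau^*$ is the content of Lemma \ref{extInclLemma}. The gap is in the chain-level computation of the Yoneda products. The map $\widetilde{x^*}$ you propose, which strips the \emph{leftmost} letter, is not a chain map, and its failure to commute with $\del$ is not confined to words whose leading letter has degree $\geq 2$. Concretely, take $x = e \in \bas E_1$ and a word $w = ew'$ in $\bas D$ with $w'$ nonempty. The differential of $D$ acts on the leftmost letter, so $\del(w) = \del^E(e)\,w'$ lies in $\p\, w'$; since $w'$ begins with a letter of $\bas F$ (the words alternate), one gets $\widetilde{e^*}(\del(w)) = 0$, whereas $\del(\widetilde{e^*}(w)) = \del(w') \neq 0$ in general. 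So the square fails to commute on essentially every word beginning with $x$, and the correction terms cannot be pushed onto words with high-degree leading letters as you claim. This matters because your whole surjectivity argument rests on knowing that the product of the one-letter classes is (up to controlled error) the dual of the concatenated word.

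The paper circumvents this in Lemma \ref{multTable} by lifting the \emph{other} factor: to compute $\dual{x}{R}\cdot\dual{w}{R}$ it lifts $\dual{w}{R}$ to a chain map $\psi_w$ that strips the fixed suffix $w$ from the \emph{right}, sending $v'w \mapsto v'$. Right-stripping does commute with the leftmost-letter differential whenever $v'$ is nonempty, and the residual cases (words ending in a letter of $\bas E_{\geq j+1}$ followed by $w'$) are handled by lifting through the acyclic complex $C$ of Example \ref{ringResln}, arranged so that the image consists of words ending in $\bas E$, which pair to zero against $\dual{f}{R}$. With that lemma the multiplication table is exact, $\dual{x}{R}\cdot\dual{w}{R} = \dual{xw}{R}$ with no lower-order terms, so no induction on corrections is needed and surjectivity follows at once. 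To complete your argument you should replace the left-stripping lift with this right-stripping construction (or otherwise establish an analogue of Lemma \ref{multTable}); as written, the key step does not go through.
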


The homomorphism $\sigma \colon R \to S$ also induces a homomorphism of graded $\extSalg$-modules
$\sigma^*_M \colon \extMS \to \extMR$.  Since $\sigma^*_M$ is $\sigma^*$-equivariant, the formula
$\xi \otimes \mu \mapsto \xi \cdot \sigma^*_M(\mu)$ defines a homomorphism of graded left $\extRalg$-modules
\begin{equation*}
\theta : \extRalg \otimes_\extSalg \extMS \to \extMR.
\end{equation*}

\begin{theorem} \label{extIso}
The homomorphism of graded left $\extRalg$-modules $\theta$ is an isomorphism.
\end{theorem}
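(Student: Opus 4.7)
The plan is to reduce the theorem to a surjectivity statement via a Hilbert series comparison, and then establish surjectivity using the explicit minimal resolution $G$ from Theorem \ref{reslnThm}. Both sides of $\theta$ are bounded below and degree-wise finite-dimensional, so matching Hilbert series together with surjectivity forces $\theta$ to be an isomorphism. For the Hilbert series, Theorem \ref{extAlgIso} identifies $\extRalg$ with $\extSalg \amal \extTalg$, and Lemma \ref{hilbLemma} applied to the left $\extSalg$-module $\extMS$ gives
\begin{equation*}
\hilb{k}{\extRalg \otimes_{\extSalg} \extMS} = \frac{P^S_M(t)\,P^T_k(t)}{P^S_k(t) + P^T_k(t) - P^S_k(t)P^T_k(t)} = P^R_M(t) = \hilb{k}{\extMR},
\end{equation*}
where the middle equality is Corollary \ref{DreKraResult}. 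It remains to prove that $\theta$ is surjective.

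For surjectivity I would work with the resolution $G$ of Construction \ref{complConstr} and the resolution $D$ of $k$ over $R$ from Example \ref{fieldResln}. Since $\del^G(G) \subseteq \m G$ and $\del^D(D) \subseteq \m D$, applying $\Hom_R(-,k)$ yields complexes with zero differential, so $\extMR$ acquires a $k$-basis $\{w^* : w \in \bas G\}$ dual to $\bas G$, and $\extRalg$ acquires a $k$-basis dual to $\bas D$. Every $w \in \bas G$ factors uniquely as $w = y_1 y_2 \cdots y_n p$ with $p \in \bas P$, $n \geq 0$, and letters $y_i$ alternating between $\bas E_{\geq 1}$ and $\bas F_{\geq 1}$; the prefix $y_1 \cdots y_n$ is an element of $\bas D$. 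Each letter $y \in \bas E_{\geq 1}$ (resp.\ $\bas F_{\geq 1}$) has a dual $y^* \in \extSalg$ (resp.\ $\extTalg$), which becomes an element of $\extRalg$ via $\sigma^*$ (resp.\ $\tau^*$). The target identity is $\theta(y_1^* \cdots y_n^* \otimes p^*) = w^*$ for every such $w$, which forces the image of $\theta$ to contain every basis element of $\extMR$.

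The heart of the argument, and the main technical obstacle, is verifying this identity. I would lift each $y_i^* \in \extRalg$ to an explicit $R$-linear chain endomorphism of $G$ of (homological) degree $-|y_i|$ that acts on a word $w' \in \bas G$ by prepending $y_i$ as the new leftmost letter whenever the result is admissible, modulo correction terms lying in $\m G$. The alternation condition defining $\bas G$ exactly mirrors the free-product structure of $\extSalg \amal \extTalg$ used in Theorem \ref{extAlgIso}, so composing such lifts on the nose produces word concatenation on the dual-basis level; after applying $\Hom_R(-,k)$ the correction terms are killed since $\Hom_R(\m G, k) = 0$. The delicate bookkeeping is to define these chain maps so that composition is compatible with multiplication in $\extRalg$ — in particular, so that the lift attached to $\sigma^*$-images composes with the lift attached to $\tau^*$-images in a way that reproduces the alternating-word structure — which reduces to an induction on word length that parallels the inductive structure of $G$ itself. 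Combined with the Hilbert series equality, the resulting surjectivity of $\theta$ yields the isomorphism.
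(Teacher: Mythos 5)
Your proposal follows the paper's proof: reduce to surjectivity via the same Hilbert series comparison (Lemma \ref{hilbLemma} together with Corollary \ref{DreKraResult}), then obtain surjectivity from the fact that left multiplication by dual basis elements of $\extRalg$ realizes word concatenation, so every $\dual{w}{R}$ lies in the $\extRalg$-span of the elements $\dual{p}{R}$, which are hit by $\theta(1 \otimes \dual{p}{S})$. The chain-level verification you flag as the main technical obstacle is precisely the content of the paper's Lemma \ref{multTable} (which constructs an explicit chain map from $G$ to the resolution $D$ of $k$ lifting $\dual{w}{R}$, rather than an endomorphism of $G$), combined with Lemma \ref{extInclLemma}.
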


In order to prove Theorems \ref{extAlgIso} and \ref{extIso}, we set up notation and describe the multiplication tables
for $\extRalg$ and $\extMR$.

\begin{notation}
In the notation of Construction \ref{complConstr}, one sees that
\begin{equation*}\extMR \cong \Hom_R(G,k) \cong \Hom_R(G/\m G, k) \cong \Hom_k(G/\m G, k)\end{equation*}
is a $k$-vector space.  Let $\{\dual{w}{R} \mid w \in \bas G\}$ be the graded basis
dual to the image of $\bas G$ in $G/\m(G)$.
Also, let $\{\dual{e}{S} \mid e \in \bas E\}$ be the graded basis dual to the basis given by the image of $\bas E$ in $E/\p E$.
We will abuse language and say that $\dual{w}{R}$ starts (respectively ends) with a letter
from $\bas E$ if the first (respectively last) letter of $w$ is in $\bas E$.  Also, we will say that $\dual{w}{R}$ has length $n$ if
$w$ has length $n$.
\end{notation}

Our first lemma concerns the image of words of length one.

\begin{lemma} \label{extInclLemma}
For $e \in \bas E$, $f \in \bas F$, and $p \in \bas P$, one has
\begin{equation*}
\sigma^*(\dual{e}{S}) = \dual{e}{R}, \quad \tau^*(\dual{f}{S}) = \dual{f}{R} \quad \text{and} \quad \sigma^*_M(\dual{p}{S}) = \dual{p}{R}.
\end{equation*}
\end{lemma}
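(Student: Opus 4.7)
The strategy is to realize each of the canonical maps $\sigma^*$, $\tau^*$, and $\sigma^*_M$ at the chain level by writing down an explicit comparison chain map, and then to read off its effect on the distinguished basis of the relevant Hom complex. Because $D$, $G$, $E$, $F$, and $P$ are all minimal, the complexes $\Hom_R(-,k)$ and $\Hom_S(-,k)$ have zero differential, so each Ext group is identified with the corresponding Hom module, with distinguished basis dual to the graded generating set.

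For $\sigma^*$, view $E$ as an $R$-complex via $\sigma$ and define an $R$-linear chain map $\alpha \colon D \to E$ on the basis of $D$ by $\alpha(1_R) = 1_S$, $\alpha(e) = e$ for every $e \in \bas E_{\geq 1}$, and $\alpha(w) = 0$ on every other basis word (a word containing an $\bas F$-letter or of length $\geq 2$). To verify $\alpha$ is a chain map I use that $\del^D$ acts only on the leftmost letter of each basis word: on a pure $\bas E$ basis element, $\del^D(e) = \del^E(e)$ is a combination of shorter pure $\bas E$ terms with coefficients in $\p$, so $\alpha$ commutes with the differential there; on every remaining basis word, $\alpha(w) = 0$ and $\del^D(w)$ is either still supported outside the pure $\bas E$ locus or produces a coefficient in $\q \subset R$, which $\sigma$-equivariance sends to $0$. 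A symmetric construction produces $\gamma \colon D \to F$ covering $\tau$, using $\tau(\p) = 0$.

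For $\sigma^*_M$ I mirror the construction. View $P$ as an $R$-complex via $\sigma$ and define the $R$-linear map $\beta \colon G \to P$ by $\beta(p) = p$ for $p \in \bas P$ and $\beta(w) = 0$ on every basis word of length $\geq 2$. The chain-map verification hinges on the following structural feature of $\bas G$: in every basis word of length $\geq 2$ the letter immediately preceding the terminal $p \in \bas P$ lies in $\bas F$, since the allowed word shapes are $f_1 e_2 \cdots f_{2l-1} p_{2l}$ and $e_1 f_2 \cdots f_{2l} p_{2l+1}$. Consequently, for $w = fp$ with $f \in \bas F_1$, one has $\del^G(fp) = \del^F(f)\,p \in \q \cdot p$, and $\beta$ sends this to $\sigma(\q) \cdot p = 0$; for all longer words, first-letter differentiation either keeps the output length $\geq 2$ or produces a $\q$-multiple of a shorter word still starting with an $\bas E$- or $\bas F$-letter, and $\beta$ vanishes in every case, matching $\del^P \beta(w) = 0$.

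Each induced Ext map is then given by precomposition: evaluating $\dual{e}{S}\circ\alpha$ on a basis word $w \in \bas D_n$ returns $\delta_{e,w}$ exactly when $w$ is the single letter $e$, matching the definition of $\dual{e}{R}$, and the same calculation handles $\tau^*$ and $\sigma^*_M$. The main obstacle is the chain-map verification for $\beta$: the naive projection rule would break if $\bas G$ contained any word of the form $ep$ with $e \in \bas E_1$, because $\del^G(ep) = \del^E(e) \cdot p$ would be a nonzero $\p$-multiple of $p$ on which $\beta$ must vanish even though $\sigma$ does not annihilate $\p$. The alphabet restriction forcing the penultimate letter to lie in $\bas F$ is precisely what makes the simple projection well-defined, and recognizing this structural fact is the heart of the proof.
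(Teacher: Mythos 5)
Your proof is correct and takes essentially the same route as the paper: the paper realizes $\sigma^*$ by the quotient map $\psi \colon D \to D/D'$ where $D' = {}^{R}(\bas D\setminus \bas E) + \q E$, which is exactly your projection $\alpha$, and then evaluates on the dual basis. You additionally spell out the case of $\sigma^*_M$ (which the paper dismisses as ``similar''), and your observation that the penultimate letter of every word of $\bas G$ lies in $\bas F$ --- so that the only differential hitting the pure-$\bas P$ locus carries a coefficient in $\q$, killed by $\sigma$ --- is precisely the structural fact that makes that case work.
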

\begin{proof}
Let $\eps^R \colon D \to k$ and $\eps^S \colon E \to k$ be the augmentation maps.  Set
\begin{equation*}
D' = ~^{R}(\bas D\setminus \bas E) + \q E \subseteq D.
\end{equation*}  The definition of $\del$ shows that $D'$ is a subcomplex.
Also, since $R/\q  \cong S$ one has $D/D' = E$ as complexes of $R$-modules.
Let $\psi$ be canonical surjection $\psi \colon D \to D/D' = E$.  Then one has $\eps^R = \eps^S\psi$, and hence
$\sigma^*(\dual{e}{S}) := \psi\dual{e}{S} = \dual{e}{R}$, as desired.  The other cases are similar.
\end{proof}

Next we provide a partial multiplication table for the action of $\extRalg$ on $\extMR$.

\begin{lemma} \label{multTable}
For $w \in \bas D \cup \bas G$ with starting letter $l(w)$ and $x$ a letter in $\bas E \cup \bas F$, one has
\begin{equation*}
\dual{x}{R}\cdot\dual{w}{R} = \begin{cases}
\dual{fw}{R} & \text{if}~l(w) \in \bas E \cup \bas P~\text{and}~x = f \in \bas F \\
\dual{ew}{R} & \text{if}~l(w) \in \bas F~\text{and}~x = e \in \bas E.
\end{cases}   
\end{equation*}
\end{lemma}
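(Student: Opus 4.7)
The plan is to realize $\dual{w}{R}$ by an explicit chain-level lift of cocycles and then compute the Yoneda product with $\dual{x}{R}$ directly on the resolutions. Set $j=|w|$ and view $\dual{w}{R}$ as the cocycle $\beta \colon G_j \to k$ (or $D_j \to k$, when $w \in \bas D$) determined by $\beta(w)=1$ and $\beta(w')=0$ for all other basis elements in degree $j$. I would lift $\beta$ to a chain map $\til{\beta} \colon G \to D$ of homological degree $-j$ by declaring, on each basis element $v \in \bas G$,
\begin{equation*}
\til{\beta}(v) = \begin{cases} u & \text{if $v = uw$ for some word $u \in \bas D$,}\\ 0 & \text{otherwise,}\end{cases}
\end{equation*}
with the convention that $u = 1_R \in D_0$ when $v = w$ itself.

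The main step is to verify that $\til{\beta}$ is indeed a chain map. Since both $\del^G$ and $\del^D$ act on the leftmost letter of a basis word, the verification splits into cases. If $v = uw$ with $u = l(u) u'$ non-empty, then $\del^G(v) = \del^?(l(u))(u' w)$; expanding $\del^?(l(u)) = \sum_i c_i y_i$ in the basis of $E$ or $F$, each term $y_i u' w$ again lies in $\bas G$ by the alternation structure (since $y_i$ and $l(u)$ belong to the same set, either $\bas E$ or $\bas F$), with prefix $y_i u' \in \bas D$ (interpreted as the empty word $1_R$ in the degenerate case $|l(u)| = 1$). Hence $\til{\beta}(\del^G v) = \sum_i c_i y_i u' = \del^?(l(u)) u' = \del^D(u) = \del^D(\til{\beta}(v))$. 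If instead $v$ does not decompose as $uw$, a term $y_i v''$ in $\del^G(v)$ can have $w$ as a suffix only when $y_i v'' = w$, which forces $|v| = j$; at this boundary degree the chain-map condition is automatic since $D_{-1} = 0$.

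With the chain map in hand, the Yoneda product $\dual{x}{R} \cdot \dual{w}{R}$ is represented by the composition $\dual{x}{R} \circ \til{\beta}_{|x| + j} \colon G_{|x| + j} \to D_{|x|} \to k$. For $v \in \bas G_{|x|+j}$, this evaluates to $1$ exactly when $\til{\beta}(v) = x$, i.e., when $v = xw$. The two cases in the lemma ($x = f \in \bas F$ with $l(w) \in \bas E \cup \bas P$, and $x = e \in \bas E$ with $l(w) \in \bas F$) are precisely those for which $xw$ satisfies the alternation required to lie in $\bas G$, so in each such case the product equals $\dual{xw}{R}$. The parallel argument with $G$ replaced by $D$ throughout handles the subcase $w \in \bas D$.

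The main obstacle will be the chain-map verification: carefully tracking the alternation between letters of $\bas E$, $\bas F$, and (for $\bas G$) $\bas P$, especially in the degenerate case $|l(u)| = 1$ where $\del^?$ lands in $R$ rather than in a positive-degree basis of $E$ or $F$, and in the boundary degree $n = j$ where accidental completions to $w$ could in principle appear but cause no trouble. Once the chain map is in place, reading off the product as $\dual{x}{R}(\til{\beta}(v))$ is immediate.
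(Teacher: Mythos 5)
Your overall strategy---lift $\dual{w}{R}$ to a chain map $G \to D$ of degree $-j$ and read off the Yoneda product by composing with $\dual{x}{R}$---is the same as the paper's, but the lift you propose is \emph{not} a chain map, and the step of your verification that dismisses the problematic case contains an off-by-one error. Write $w = ew'$ with $e \in \bas E_m$, and consider $v = e''w'$ for a letter $e'' \in \bas E_{m+1}$. This $v$ does not decompose as $uw$, so $\til{\beta}(v) = 0$; but $\del^G(v) = \del^E(e'')w'$, and the expansion of $\del^E(e'')$ in the basis $\bas E_m$ will in general have a nonzero coefficient $c \in \p$ on $e$, so $\del^G(v)$ contains the term $cw$ and $\til{\beta}(\del^G(v)) = c\cdot 1_R \neq 0 = \del^D(\til{\beta}(v))$. (Concretely: $S = k[x]/(x^3)$, $\del^E(e_2) = x^2e_1$, $w = e_1w'$, $v = e_2w'$.) In your second case you correctly reduce to ``$y_iv'' = w$,'' but this forces $|v| = j+1$, not $|v| = j$: the differential drops degree by one. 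In degree $j+1$ the chain-map condition lands in $D_0 \neq 0$ and is not vacuous; and since $|x| \geq 1$, the product $\dual{x}{R}\cdot\dual{w}{R}$ is computed precisely from the component $\til{\beta}_{j+1}$ (and higher), so the failure occurs exactly where you need the map.

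The paper's proof repairs this defect. Its chain map $\psi_w$ agrees with your strip-off map on the span of $\bas Gw$, but on the words ending in a letter of $\bas E_{\geq m+1}$ followed by $w'$ it is defined instead by lifting, through the acyclic resolution $C$ of $T$ from Example \ref{ringResln}, the map sending $w$ to $1 \in C_0$; the key additional observation is that these correction terms lie in the span of words ending in $\bas E$-letters and are therefore annihilated by $\dual{f}{R}$, so the multiplication table is unaffected by them. To make your argument work you would have to add such a correction term on $\freemultword{R}{GE_{\geq m+1}}{w'}$ and prove it does not contribute to the product (and treat the symmetric cases), which is essentially the content of the paper's proof.
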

\begin{proof}
Suppose $w = ew' \in \bas G_i$, with $e \in \bas E$.  Let $f$ be an element in $F$.
We define a chain map $\psi_w \in \Hom_R(G,D)_{-i}$ such that $\eps^R\psi_w = \dual{w}{R}$ as follows.  Set
\begin{equation*}\bas G^w = \{x \in \bas G \mid x \not\in (\bas Gw \cup \bas G{\bas E}_{\geq j+1}w')\},\end{equation*}
where $\bas Gw$ denotes elements of $\bas G$ that end in $w$ (including $w$), and $\bas G{\bas E}_{\geq j+1}w'$ denotes
elements of $\bas G$ ending in a letter of $\bas E_{\geq j+1}$, followed by $w'$.
Let $\freemultpower {R}{G}{w}$ be the free $R$-module generated by $\bas G^w$.

The definition of $\del$ shows $\freemultpower {R}{G}{w}$ is a subcomplex of $G$.
Note that $G/\freemultpower {R}{G}{w}$ is a complex of free $R$-modules with basis
$\bas Gw \cup \bas G\bas E_{\geq j + 1}w'$ and differential given by restricting $\del$
to $\freemultword{R}{G}{w}$ and $\freemultword{R}{GE_{\geq j + 1}}{w'}$.
If $v = v'w \in \bas Gw$, define $\alpha'(v) = v'$, and extend $\alpha'$ by $R$-linearity to all of $\freemultword{R}{G}{w}$.
Then for $v = v'w \in \bas Gw$, one has
\begin{equation*}\alpha'(\del(v)) = \alpha'(\del(v'w)) = \alpha'(\del(v')w) = \del(v') = \del(\alpha'(v)).\end{equation*}

Let $B$ be the subcomplex of $G/\freemultpower {R}{G}{w}$ spanned by
$\bas G\bas E_{\geq j+1}w' \cup \{w\}$, and let $C$ be the resolution of $T$ as an $R$-module given
in Example \ref{ringResln}.  As $C$ is acyclic and $B$ is a free complex, we may
define $\alpha'' \colon B \to D$ by lifting the map that sends $w \in B_i$ to $1 \in C_0$ and composing with the
inclusion $C \to D$.  Note that the words in the image of $\alpha''$ end in letters from $\bas E$.
Since $\alpha'(w) = 1_R = \alpha''(w)$, we may define
\begin{equation*}\alpha(v) := \begin{cases} \alpha'(v)  & v \in \freemultword{R}{G}{w} \\
                                            \alpha''(v) & v \in \freemultword{R}{GE_{\geq j+1}}{w'}\end{cases}\end{equation*}
Let $\psi_w$ denote the composition $G \onto G/\freemultpower {R}{G}{w} \xra{\alpha} D$.

Clearly, one has $\eps^R\psi_w = \dual{w}{R}$, hence $\dual{f}{R} \cdot \dual{w}{R} = \dual{f}{R}\psi_w$.
Let $v \in \bas G$ be a word.  If $v \in \bas G^w$, then $\psi_w(v) = 0$.  If $v \in \bas Gw$, then write $v = v'w$.  Then
\begin{equation*}\dual{f}{R}\psi_w(v) = \dual{f}{R}(v') = \begin{cases} 1 & \text{if}~v' = f \\
                                                                        0 & \text{otherwise.}\end{cases}\end{equation*}
If $v \in (\bas G\bas E_{\geq j+1})w'$, then $\psi_w(v)$ is in the span of words with rightmost letters in $\bas E$.
Hence $\dual{f}{R}\psi_w(v) = 0$.
Therefore
\begin{equation*}\dual{f}{R}\cdot\dual{w}{R} = \dual{f}{R}\psi_w(v) = \left\{\begin{array}{cc} 1 & \text{if}~v = fw \\
                                                             0 & \text{otherwise.}\end{array}\right\} = \dual{fw}{R}(v).
\end{equation*}                                                             
The other cases are similar, and often easier.
\end{proof}

\begin{proof}[Proof of Theorem \emph{\ref{extAlgIso}}]
Under the hypothesis of the theorem, the $k$-algebras $\extRalg$, $\extSalg$, and $\extTalg$ are degree-wise finite.
By definition, one has
\begin{equation*}
\hilb{k}{\extRalg} = P^R_k(t), \quad \hilb{k}{\extSalg} = P^S_k(t) \quad \text{and} \quad \hilb{k}{\extTalg} = P^T_k(t).
\end{equation*}
Lemma \ref{hilbLemma} and Corollary \ref{DreKraResult} yield $\hilb{k}{\extRalg} = \hilb{k}{\extSalg \amal \extTalg}$.
As $\phi$ is a homogeneous $k$-linear map, it suffices to show that it is surjective.  Lemma \ref{multTable} shows that
\begin{equation*}\{\dual{e}{R} \mid e \in \bas E\} \cup \{\dual{f}{R} \mid f \in \bas F\}\end{equation*}
generates $\extRalg$ as a $k$-algebra.  Lemma \ref{extInclLemma} shows that these generators are in the image of $\phi$.
\end{proof}

\begin{proof}[Proof of Theorem \emph{\ref{extIso}}]
By Lemma \ref{hilbLemma} and Corollary \ref{DreKraResult} the Hilbert series of $\extRalg \otimes_{\extSalg} \extMS$ and $\extMR$
are equal, so it is enough to show that $\theta$ is surjective. By Lemma \ref{multTable}, $\extMR$ is generated as a
left $\extRalg$-module by $\{\dual{p}{R} \mid p \in \bas P\}$.  By
Lemma \ref{extInclLemma}, $\theta(1 \otimes \dual{p}{S}) = \dual{p}{R}$, and hence $\theta$ is surjective.
\end{proof}

Recall that a graded module $M$ over a connected algebra $A$ is said to be \emph{Koszul} when $\Ext^i_A(M,k)_j = 0$ if $j \neq i$.
A connected algebra $A$ is Koszul if $k$ is Koszul as an $A$-module.  By Remark \ref{gradedRemark}, the homomorphisms $\phi$ and
$\theta$ preserve the internal gradings of $\extSalg \amal \extTalg$ and $\extRalg$, giving the next corollary.  The
equivalence of the first two conditions was proved in \cite{BacFro85}.

\begin{corollary} \label{kosBacFro}
The following conditions are equivalent.
\begin{enumerate}
\item The algebra $R$ is Koszul.
\item The algebras $S$ and $T$ are Koszul.
\item There exists an $S$-module $M$ that is Koszul as a $R$-module. \hfill \qed
\end{enumerate}
\end{corollary}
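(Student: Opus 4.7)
The proof hinges on the bigraded refinements of Theorems \ref{extAlgIso} and \ref{extIso}. In the graded setting of \ref{ringSettings}, Remark \ref{gradedRemark} shows that both $\phi$ and $\theta$ respect internal degrees, giving isomorphisms of bigraded $k$-algebras and bigraded left $\extRalg$-modules respectively. A graded $A$-module $N$ is Koszul precisely when $\Ext_A(N,k)$ is concentrated on the diagonal of the bigrading (homological degree equal to internal degree).

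For $(1) \Leftrightarrow (2)$, I would invoke $\phi$: by Theorem \ref{extAlgIso}, $\extRalg \cong \extSalg \amal \extTalg$ as bigraded $k$-algebras. The $k$-basis of the coproduct described in \ref{freeProdAlg} consists of alternating tensors of homogeneous basis elements of $\extSalg_+$ and $\extTalg_+$, and the bidegree of each such tensor is the sum of the bidegrees of its factors. The coproduct is therefore concentrated on the diagonal if and only if both $\extSalg$ and $\extTalg$ are. Then $(2) \Rightarrow (3)$ is immediate: from the just-proved implication $(2) \Rightarrow (1)$, $k$ is Koszul as an $R$-module, and $k$ is naturally an $S$-module via $\pi_S$, so $M := k$ witnesses $(3)$.

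For $(3) \Rightarrow (2)$, suppose $M$ is an $S$-module Koszul as an $R$-module. After a degree shift I may assume $M$ is generated in degree $0$, so that $\extMS^0_0$ is nonzero; fix a nonzero $m_0 \in \extMS^0_0$. Applying $\theta$ together with the basis of \ref{freeProdMod} presents $\extMR$ with a $k$-basis of alternating tensors ending in an $\extMS$-basis element, each of bidegree equal to the sum of the bidegrees of its factors. Since $\extMR$ is diagonal, each such basis element must be diagonal. The sub-basis $\{1 \otimes m\}$, with $m$ ranging over a bigraded basis of $\extMS$, forces $\extMS$ to be diagonal, so $M$ is Koszul over $S$. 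The sub-basis $\{b \otimes m_0\}$, with $b$ in a bigraded basis of $\extTalg_+$, forces $\extTalg$ to be diagonal, so $T$ is Koszul. Finally, picking a diagonal nonzero element $b_0 \in \extTalg_+$ and considering $\{a \otimes b_0 \otimes m_0\}$ as $a$ ranges over a bigraded basis of $\extSalg_+$ forces $\extSalg$ to be diagonal, so $S$ is Koszul.

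The main obstacle is the last extraction: to deduce diagonality of $\extSalg_+$ one must first know $\extTalg_+$ contains a diagonal nonzero element, which uses both the preceding step and the mild nondegeneracy $T \neq k$. The case $T = k$ (and symmetrically $S = k$) collapses the fiber product to $S$ (resp.\ $T$) and should be treated as a trivial edge case.
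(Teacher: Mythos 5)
Your proof is correct and takes essentially the same route as the paper, which derives the corollary solely from the observation (Remark \ref{gradedRemark}) that $\phi$ and $\theta$ preserve internal degrees, citing \cite{BacFro85} for the equivalence of (1) and (2) and leaving the word-basis bookkeeping for (3) to the reader. You supply exactly those details, reprove $(1)\Leftrightarrow(2)$ directly from the coproduct basis of \ref{freeProdAlg} rather than citing Backelin--Fr\"oberg, and correctly flag that $(3)\Rightarrow(2)$ requires $\pi_S$ and $\pi_T$ to be non-isomorphisms (otherwise $M=S$ is a free, hence Koszul, $R$-module and the implication fails) --- a hypothesis the paper leaves implicit.
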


The functor $\Ext_R(-,k)$ has a property similar to the one given in Theorem \ref{extAlgIso}.

\begin{theorem} \label{thmExtFibMod}
Let $M$, $N$ and $V$ be $S$, $T$ and $k$-modules respectively, such that there
exist surjective $\pi_S$ and $\pi_T$-equivariant homomorphisms $M \xra{\mu} V \xla{\nu} N$
with $\ker \mu = \p M$ and $\ker \nu = \q N$.  The exact sequence of $R$-modules
\begin{equation*}0 \to M \times_V N \xra{\iota} M \times N \xra{\mu - \nu} V \to 0\end{equation*}
induces an exact sequence of graded left $\extRalg$-modules
\begin{equation*}0 \to \extRalg \otimes_k V^* \xra{(\mu^*,-\nu^*)} \extMR \times \extNR \xra{\iota^*} \extLR \to 0\end{equation*}
where $\extLR = \Ext_R(M \times_V N,k)$, and $V^* = \Hom_k(V,k)$.  In particular, there is an equality of Poincar\'e series
\begin{equation*}P_{M\times_V N}^R(t) + (\rank_k V)P^R_k(t) = P^R_M(t) + P^R_N(t).\end{equation*}
\end{theorem}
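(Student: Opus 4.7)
The plan is to apply the functor $\Ext_R(-,k)$ to the short exact sequence $0 \to M \times_V N \to M \oplus N \to V \to 0$ and show that the resulting long exact sequence collapses into the claimed short exact sequences. This collapse reduces to proving injectivity of
\begin{equation*}
(\mu^*, -\nu^*)\colon \Ext_R(V,k) \to \extMR \oplus \extNR;
\end{equation*}
exactness will then force every connecting homomorphism to vanish, after which the Poincar\'e series identity follows by taking dimensions degree-wise.

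First, I would identify the three Ext modules in terms of the free product structure. Since $\m$ acts trivially on the $k$-vector space $V$, we have $\Ext_R(V,k) \iso \extRalg \otimes_k V^*$, and Theorem \ref{extIso} together with its symmetric form for $T$-modules gives $\extMR \iso \extRalg \otimes_{\extSalg} \extMS$ and $\extNR \iso \extRalg \otimes_{\extTalg} \extNT$. Naturality of these identifications presents $\mu^*$ as $\mathrm{id}_{\extRalg} \otimes_{\extSalg} \mu^*_S$, where $\mu^*_S \colon \Ext_S(V,k) \to \extMS$ is induced by $\mu$; the hypothesis $\ker \mu = \p M$ guarantees that both $\Ext^0_S(V,k)$ and $\Ext^0_S(M,k)$ are canonically $V^*$ with $\mu^*_S$ the identity in degree $0$. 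A symmetric description applies to $\nu^*$.

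The main obstacle is the injectivity of $(\mu^*, -\nu^*)$; neither $\mu^*$ nor $\nu^*$ is injective on its own (for $M = S$ and $V = k$, the map $\mu^*_S$ is the augmentation $\extSalg \to k$). To handle this I would exploit the free product description $\extRalg \iso \extSalg \amal \extTalg$ of Theorem \ref{extAlgIso}. The alternating word basis of the free product exhibits $\extRalg$ as a free right $\extSalg$-module on the set consisting of the empty word together with all words ending in an $\extTalg_{\geq 1}$-letter, and symmetrically as a free right $\extTalg$-module. Writing $\xi = \sum_w w \otimes v^*_w$, with $v^*_w \in V^*$, for an element of $\ker \mu^* \cap \ker \nu^*$ of degree $n$, and expanding $\mu^*(\xi)$ in this right $\extSalg$-basis, the coefficient indexed by a basis word $w$ of degree $n$ is exactly $v^*_w$ (the complementary $\extSalg$-factor must have degree $0$). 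Thus $\mu^*(\xi) = 0$ forces $v^*_w = 0$ whenever $w$ is the empty word or ends in $\extTalg_{\geq 1}$, and symmetrically $\nu^*(\xi) = 0$ forces $v^*_w = 0$ whenever $w$ is empty or ends in $\extSalg_{\geq 1}$. Since every basis word of positive degree ends in exactly one of the two algebras, and the empty word lies in both bases, all $v^*_w$ vanish and $\xi = 0$.

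Once injectivity is established, the long exact Ext sequence breaks into the claimed short exact sequences of graded left $\extRalg$-modules, and the Poincar\'e series equality is read off by summing Hilbert series.
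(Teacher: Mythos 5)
Your proposal is correct and follows essentially the same route as the paper: both reduce the statement to the injectivity of $(\mu^*,-\nu^*)$ and establish it by showing, via Theorems \ref{extAlgIso} and \ref{extIso}, that $\Ker \mu^*$ lies in the span of elements of $\extRalg \otimes_k V^*$ ending in a letter of $\bas E_{\geq 1}$ while $\Ker \nu^*$ lies in the span of those ending in $\bas F_{\geq 1}$, so the two kernels meet only in zero. The only cosmetic difference is that the paper locates $\Ker \mu^*$ by composing with the map to $\extRalg \otimes_{\extSalg} k^n$ induced by a free cover $S^n \to M$, whereas you read off the same conclusion directly from the free right-$\extSalg$-module decomposition of $\extRalg$.
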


\begin{proof}
The sequence of $R$-modules defining $M \times_V N$ induces an exact sequence of graded $\extRalg$-modules
\begin{equation*}
\shift^{-1} \extLR \to \extRalg \otimes_k V^* \xra{(\mu^*,-\nu^*)} \extMR \times \extNR \xra{\iota^*} \extLR \to \shift(\extRalg \otimes_k V^*).
\end{equation*}

Thus, we need to show that $(\mu^*,-\nu^*)$ is injective. Set $n = \rank_k V$.
One has $S$-linear maps $S^n \to M \xra{\mu} V$ that induce homomorphisms of graded $\extRalg$-modules
$\extSalg \otimes_k V^* \to \extMS \to k^n$.  Tensoring with $\extRalg$ over $\extSalg$ on the left, one obtains
\begin{equation*}
\extRalg \otimes V^* \xra{\mu^*} \extMR \to \extRalg \otimes_\extSalg k^n
\end{equation*}
since $\extMR \cong \extRalg \otimes_\extSalg \extMS$.  Under the isomorphism in Theorem \ref{extAlgIso}, the kernel of this
composition, and hence $\mu^*$, is in the span of elements the elements of $\extRalg \otimes_k V^*$ ending in $\bas E_{\geq 1}$.
Similarly, one can show that the kernel of $\nu^*$ is contained the span of those elements of $\extRalg \otimes_k V^*$ ending in
$\bas F_{\geq 1}$.  Hence $\Ker (\mu^*,-\nu^*) = \Ker \mu^* \cap \Ker \nu^* = 0$.
\end{proof}

\section{Depth of cohomology modules} \label{secDepth}

The notation and conventions given in sections \ref{secComplexConstr} and \ref{secYonAlg} are still in force.

In order to describe the cohomology module for an
arbitrary $R$-module $L$, we use an observation of Dress and Kr\"amer in \cite[Remark 3]{DreKra75}.
Recall that the syzygy $\syz^R_1 L$ of an $R$-module 
$L$ is the kernel of a free cover $F \to L$; it is defined uniquely up to isomorphism; for $n \geq 2$ one sets
$\syz_n^R L = \syz^R_1 \syz_{n-1}^R L$.
\begin{proposition} \label{DreKraSyz}
Let $L$ be a left $R$-module. Then $\syz_2^R(L) \cong M \oplus N$ where $M$ is an $S$-module and $N$ is a $T$-module.
\end{proposition}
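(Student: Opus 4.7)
The plan rests on two specific features of the fiber product $R = S \times_k T$. Identifying $\p$ and $\q$ with the ideals $\p \oplus 0$ and $0 \oplus \q$ of $R$, one has $\p \cap \q = 0$ and $\p \q = 0$. These give, for every free $R$-module $F$, a direct sum decomposition of $R$-submodules
\begin{equation*}
\m F = \p F \oplus \q F,
\end{equation*}
since $\m = \p + \q$ and $\p F \cap \q F = 0$ coordinate-wise. Moreover, $\q$ annihilates $\p F$ and $\p$ annihilates $\q F$, so any $R$-submodule contained in $\p F$ is automatically killed by $\q$, hence is an $S$-module, and dually for submodules of $\q F$.

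Next I would take a minimal free resolution $\cdots \to F_1 \xra{d_1} F_0 \to L \to 0$ of $L$ over $R$. Minimality yields $\im d_1 \subseteq \m F_0$ and, more importantly, $\syz_2^R L = \ker d_1$ lies in $\m F_1 = \p F_1 \oplus \q F_1$. Set
\begin{equation*}
M := \syz_2^R L \cap \p F_1 \quad \text{and} \quad N := \syz_2^R L \cap \q F_1.
\end{equation*}
By the previous paragraph $M$ is an $S$-module, $N$ is a $T$-module, and $M \cap N = 0$.

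The main step, and essentially the only thing to verify, is that $\syz_2^R L = M + N$. Given $z \in \syz_2^R L$, decompose it uniquely as $z = z_\p + z_\q$ with $z_\p \in \p F_1$ and $z_\q \in \q F_1$. The $R$-linearity of $d_1$ forces $d_1(z_\p) \in \p F_0$ and $d_1(z_\q) \in \q F_0$. Since $d_1(z) = 0$, we obtain $d_1(z_\p) = -d_1(z_\q) \in \p F_0 \cap \q F_0 = 0$, so both summands lie in $\ker d_1$. This places $z_\p \in M$ and $z_\q \in N$, giving the claimed direct sum $\syz_2^R L \cong M \oplus N$. The whole argument is driven by the single identity $\p\q = 0$ in $R$; once that is in hand, the proof is just tracking where things land.
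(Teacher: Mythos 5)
Your proof is correct and follows essentially the same route as the paper: decompose $\m F_1 = \p F_1 \oplus \q F_1$, intersect the second syzygy with each summand, and use $\p F_0 \cap \q F_0 = 0$ to see that the two components of a kernel element are separately killed by $d_1$. The only (welcome) addition is that you make explicit why $\p\q = 0$ forces the summands to be $S$- and $T$-modules, a point the paper leaves implicit.
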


\begin{proof}
Recall that the maximal ideal of $R$ is $\m = \p \oplus \q$.   Let $\varphi : A \to B$ be a minimal free presentation 
of $L$ over $R$.  One then has
\begin{eqnarray*}
\syz_2^R(L) = \Ker \varphi & = & \Ker \varphi \cap \m A \\
& = & \Ker \varphi \cap (\p A \oplus \q A) \\
& = & (\Ker \varphi \cap \p A) \oplus (\Ker \varphi \cap \q A)
\end{eqnarray*}
To see the last equality, suppose that $(x_1,x_2)$ in $\p A \oplus \q A$ satisfies
$\varphi((x_1,x_2)) = 0$.  Note that $\varphi((x_1,0))$ is in $\varphi(\p A) \subseteq \p B$ and
$\varphi((0,x_2))$ is in $\varphi(\q A) \subseteq \q B$.  Also,
$\p B \cap \q B = 0$, hence $\varphi((x_1,0)) = 0 = \varphi((0,x_2))$.  Taking
$M = \Ker \varphi \cap \p A$ and $N = \Ker \varphi \cap \q A$
gives the desired result.
\end{proof}

By putting together Theorem \ref{extIso} and Proposition \ref{DreKraSyz}, we obtain a nearly
complete description of the cohomology of arbitrary $R$-modules.

\begin{corollary} \label{calRModExSeq}
Set $\extLR = \Ext_R(L,k)$, $\extMS = \Ext_S(M,k)$, and $\extNT = \Ext_T(N,k)$.
There is then an exact sequence of graded left $\extRalg$-modules
\begin{equation*}
0 \to (\shift^{-2}\extRalg \otimes_\extSalg \extMS)\oplus(\shift^{-2}\extRalg \otimes_\extTalg \extNT)
\to \extLR \to \extLR/\extLR^{\geq 2} \to 0 
\end{equation*}
\end{corollary}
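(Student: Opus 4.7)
The plan is to realize $\extLR^{\geq 2}$ as the claimed direct sum by dimension-shifting along the first two steps of a minimal free $R$-resolution of $L$, and then to splice this with the tautological short exact sequence $0 \to \extLR^{\geq 2} \to \extLR \to \extLR/\extLR^{\geq 2} \to 0$.

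First I would fix a minimal free resolution $\cdots \to A_1 \to A_0 \to L \to 0$ over $R$ and break the truncation $0 \to \syz_2^R L \to A_1 \to A_0 \to L \to 0$ into the two short exact sequences
\begin{equation*}
0 \to \syz_1^R L \to A_0 \to L \to 0 \qquad\text{and}\qquad 0 \to \syz_2^R L \to A_1 \to \syz_1^R L \to 0.
\end{equation*}
By Proposition \ref{DreKraSyz}, $\syz_2^R L \cong M \oplus N$ with $M$ an $S$-module and $N$ a $T$-module. Since $A_0$ and $A_1$ are free, $\Ext^{\geq 1}_R(A_i,k) = 0$, and the two long exact sequences combine to give isomorphisms of $k$-vector spaces
\begin{equation*}
\delta \colon \Ext^{i-2}_R(\syz_2^R L, k) \xrightarrow{\;\cong\;} \Ext^i_R(L,k) \qquad (i \geq 2).
\end{equation*}

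The next step is to check that $\delta$ is $\extRalg$-linear. Splicing the two sequences gives a Yoneda class $\eta \in \Ext^2_R(L, \syz_2^R L)$, and $\delta$ is right Yoneda multiplication by $\eta$; by associativity of the Yoneda product this commutes with the left action of $\extRalg = \Ext_R(k,k)$. Invoking the decomposition $\syz_2^R L = M \oplus N$ together with Theorem \ref{extIso} applied to $M$ over $S$, and symmetrically to $N$ over $T$, yields $\extRalg$-linear isomorphisms $\Ext_R(M,k) \cong \extRalg \otimes_\extSalg \extMS$ and $\Ext_R(N,k) \cong \extRalg \otimes_\extTalg \extNT$.

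Finally, the degree shift of $2$ implicit in $\delta$ is absorbed into the formal shift $\shift^{-2}$, so these identifications combine into a single $\extRalg$-linear isomorphism from $(\shift^{-2}\extRalg \otimes_\extSalg \extMS) \oplus (\shift^{-2}\extRalg \otimes_\extTalg \extNT)$ onto $\extLR^{\geq 2}$. Inserting this into the tautological sequence produces the asserted exact sequence. The only real subtlety I anticipate is the $\extRalg$-equivariance of $\delta$; everything else is a direct assembly of Proposition \ref{DreKraSyz}, Theorem \ref{extIso}, and internal-degree bookkeeping.
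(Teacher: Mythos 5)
Your proof is correct and is essentially the argument the paper intends (the paper leaves it implicit, saying only that the corollary follows "by putting together" Theorem \ref{extIso} and Proposition \ref{DreKraSyz}): dimension-shift along the first two syzygies of a minimal resolution, note the connecting isomorphism is left $\extRalg$-linear since it is Yoneda composition on the right with the extension class, and apply Theorem \ref{extIso} to the two summands of $\syz_2^R L$. Your handling of the one genuine subtlety — the $\extRalg$-equivariance of $\delta$, plus the use of minimality to get the identification of $\Hom_R(\syz_2^R L,k)$ with $\Ext^2_R(L,k)$ in the edge degree $i=2$ — is exactly right.
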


Corollary \ref{calRModExSeq} allows us to compute the depth of the cohomology
module of an $R$-module.  For uses of this invariant, see \cite{FHJLT88} or \cite{AvrVel05}.

\begin{definition}
For a graded connected $k$-algebra $\calE$ and a graded left $\calE$-module $\calM$,
one defines the depth of $\calM$ over $\calE$ by means of the formula
\begin{equation*}\depth_\calE \calM = \inf \{ n \in \bbN \mid \Ext_\calE^n(k,\calM) \neq 0 \}.\end{equation*}
\end{definition}

For $M = k$, the following theorem recovers \cite[Example 36.(e).2]{FHT01}.  The idea to use the
resolution of $\extSalg \amal \extTalg$ constructed from those of $\extSalg$ and $\extTalg$ is taken
from there.  Note that for each $i$, $\Ext^i_\extRalg(k,\extMR)$ is a graded abelian group, with the internal
grading given by the cohomological grading on $\extMR$.  We will denote its $j$th graded piece by
$\Ext^i_\extRalg(k,\extMR)^j$.

More precisely, if $\calF_i$ is the $i^\text{\scriptsize th}$ free module in a graded free resolution of $k$ over
$\extRalg$, then $\Ext_\extRalg^i(k, \extMR)$ carries the induced grading of $\Hom_\extRalg(\calF_i, \extMR)$.
A homomorphism $\varphi \colon \calF_i \to \extMR$ has degree $j$ if it satisfies
$\varphi(\calF_i^n) \subseteq \extMR^{n-j}$ for all $n \in \mathbb{Z}$.

\begin{theorem} \label{depthSMod}
If $M$ is a finitely generated non-zero $S$-module, and neither $\pi_S$ nor $\pi_T$ is an isomorphism.
Then one has $\depth \extMR = 1$.

More precisely, for each $j \geq 2$, one has
\begin{equation*}
\Ext_\extRalg^1(k,\extMR)^j \oplus \Ext_\extRalg^1(k,\extMR)^{j+1} \neq 0,
\end{equation*}
unless $S$ and $T$ both have global dimension $1$ and $M$ is free, in which case
\begin{equation*}
\Ext_\extRalg^1(k,\extMR)^1 \neq 0.
\end{equation*}
\end{theorem}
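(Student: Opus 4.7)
My plan is to work with the small free resolution
\[
0 \to \extRalg \otimes_{\extSalg} \extSalg_+ \oplus \extRalg \otimes_{\extTalg} \extTalg_+ \to \extRalg \to k \to 0
\]
of $k$ as a left $\extRalg$-module, which follows from Theorem~\ref{extAlgIso} together with the normal form of the coproduct from \ref{freeProdAlg}: right multiplication by $\extSalg_+$ and by $\extTalg_+$ cuts out complementary direct summands of $\extRalg_+$. Applying $\Hom_{\extRalg}(-, \extMR)$ to this resolution reduces $\depth_{\extRalg} \extMR = 1$ to the twin assertions that the natural map
\[
\nabla \colon \extMR \to \Hom_{\extSalg}(\extSalg_+, \extMR) \oplus \Hom_{\extTalg}(\extTalg_+, \extMR), \qquad m \mapsto (\alpha \mapsto \alpha m,\; \beta \mapsto \beta m),
\]
is injective but not surjective.

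For $\ker \nabla = 0$, I would use Theorem~\ref{extIso} to identify $\extMR$ with $\extRalg \otimes_{\extSalg} \extMS$ and then, via Lemma~\ref{hilbLemma} and \ref{freeProdMod}, decompose it as a graded $k$-vector space
\[
\extMR \cong \extMS \,\oplus\, (\extTalg_+ \otimes_k \extMS) \,\oplus\, (\extSalg_+ \otimes_k \extTalg_+ \otimes_k \extMS) \,\oplus\, (\extTalg_+ \otimes_k \extSalg_+ \otimes_k \extTalg_+ \otimes_k \extMS) \,\oplus\, \cdots,
\]
on which the left $\extSalg_+$- and $\extTalg_+$-actions are dictated by the coproduct multiplication. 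Given a socle element $m$, I would peel off the summands one at a time: picking $\beta \in \extTalg_+$ of minimal positive cohomological degree and examining the $\extTalg_+ \otimes_k \extMS$-component of $\beta m$ at $\extTalg_+$-grading~$\deg \beta$, only the $\extMS$-summand of $m$ can contribute (every other contribution factors through a product $\beta \beta'_i$ with $\beta'_i \in \extTalg_+$, hence lives in strictly higher degree), forcing that summand to vanish. A symmetric step using $\alpha \in \extSalg_+$ of minimal degree kills the next summand, and iterating gives $m = 0$.

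For $\nabla$ not surjective, I would produce a pair $(0, \psi)$ outside the image of the form $\psi(\beta) = \beta \cdot \dual{p}{R}$ for a suitable basis element $p \in \bas P$. In the generic case, $p$ is chosen so that $\extSalg_+ \cdot \dual{p}{S} \neq 0$ in $\extMS$; I would establish the existence of such $p$ outside the exceptional subcase by translating the condition into the statement that some entry of the minimal $S$-resolution of $M$ fails to lie in~$\p^2$. If $(0, \psi) = \nabla(m)$, then $\extTalg_+(m - \dual{p}{R}) = 0$ and the peeling argument of the previous paragraph forces the $\extMS$-summand of $m$ to equal $\dual{p}{S}$, which together with $\extSalg_+ m = 0$ contradicts the choice of~$p$. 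Varying $p$ across $\bas P$ in cohomological degrees $\geq 2$ and pairing with generators of $\bas F$ (or, by symmetry, $\bas E$) across positive degrees then produces the interleaved nonvanishing $\Ext^1_{\extRalg}(k, \extMR)^j \oplus \Ext^1_{\extRalg}(k, \extMR)^{j+1} \neq 0$ claimed for each $j \geq 2$. In the exceptional subcase, where $M$ is free and $\gldim S = \gldim T = 1$, $\extMS$ is concentrated in degree zero and the above argument collapses; there I would use the explicit description $\extRalg \cong k\langle y, y' \rangle/(y^2, {y'}^2)$ arising when each of $S$ and $T$ is a DVR to construct by hand a single nontrivial cocycle of internal degree~$1$ from a minimal generator of $\extSalg_+$ or $\extTalg_+$ acting on a degree-zero basis element of $\extMR$.

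The main obstacle I anticipate is the bookkeeping of bigraded degrees, in particular ensuring that the cocycles constructed above occupy the correct internal degrees to realize the interleaved nonvanishing claimed by the theorem. The socle-peeling step is routine once the summand decomposition is set up, but the construction of $\psi$ across the required degree windows --- and the separate, explicit handling of the exceptional DVR case --- will demand careful control of the cohomological degrees of the generators in $\bas P$, $\bas E$, and $\bas F$, which I expect to be the most technical ingredient.
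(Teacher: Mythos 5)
Your framework is sound and is essentially the paper's argument in a different packaging: the short exact sequence
\begin{equation*}
0 \to \left(\extRalg\otimes_{\extSalg}\extSalg_+\right) \oplus \left(\extRalg\otimes_{\extTalg}\extTalg_+\right) \to \extRalg \to k \to 0
\end{equation*}
is exactly the degree-one truncation of the resolution of $k$ over $\extRalg$ that the paper assembles from resolutions of $k$ over $\extSalg$ and $\extTalg$, and computing $\Ext^1_{\extRalg}(k,\extMR)$ as $\coker\nabla$ is legitimate because $\Ext^1_{\extRalg}(\extRalg,-)=0$. Your socle-peeling proof that $\nabla$ is injective is the same normal-form argument the paper runs with degree-one elements $\varsigma\in\extSalg^1$ and $\vartheta\in\extTalg^1$; that half is fine.

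The gap is in the non-surjectivity half, and it is a missing idea rather than bookkeeping. Your candidate classes $(0,\psi)$ with $\psi(\beta)=\beta\cdot\dual{p}{R}$ for $p\in\bas P$ have internal degree equal to the cohomological degree of $p$, hence bounded above by $\pd_S M$. When $\pd_S M<\infty$ (e.g.\ $M=k$ over a regular local $S$ of dimension $2$) you have no candidates at all in degrees $j>\pd_S M$, yet the theorem demands nonvanishing in every window $\{j,j+1\}$ with $j\geq2$. Worse, when $M$ is free over $S$ but $\gldim S\geq 2$ or $\gldim T\geq 2$ --- a case the theorem covers and which is \emph{not} your exceptional DVR subcase --- the module $\extMS$ is concentrated in degree $0$ with trivial $\extSalg_+$-action, so $\extSalg_+\dual{p}{R}=0$ and every one of your pairs satisfies $(0,\psi)=\nabla(\dual{p}{R})$: your cocycles are all coboundaries and the construction produces nothing. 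The device you are missing is the paper's use of long alternating words in the free product: with $\varsigma\in\extSalg^1$, $\vartheta\in\extTalg^1$, and (when available) $\varsigma'\in\extSalg^2$ or $\vartheta'\in\extTalg^2$ and $\mu'\in\extMS^1$, the elements $(\vartheta\varsigma)^{j-1}\vartheta\mu$, $(\vartheta\varsigma)^{j-1}\vartheta'\mu$, $(\vartheta\varsigma)^{j}\vartheta\mu$, $(\varsigma\vartheta)^{j-1}\mu'$, etc., applied to $\mu\in\extMS^0$, are nonzero for every $j$ by the normal form of $\extSalg\amal\extTalg$, have internal degree growing linearly in $j$, and feed into cocycles $\phi_{\alpha_j,\beta_j}$ that hit the required windows in each of the three cases ($M$ not free, $\gldim T\geq2$, $\gldim S\geq2$). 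Without some mechanism of this kind for manufacturing classes of unbounded internal degree out of degree-zero data, your proof establishes $\depth_{\extRalg}\extMR\geq 1$ and the non-surjectivity of $\nabla$ only in low degrees, not the full statement.
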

\begin{proof}
As $S$ and $T$ are not fields, one can find $\varsigma \in \extSalg^1 \setminus \{0\}$ and $\vartheta \in \extTalg^1 \setminus \{0\}$.
We fix these elements for the remainder of the proof.  We first show that for each $\mu \in \extMR^i$, there is an element
$\xi \in \extRalg^+$ so that $\xi\mu \neq 0$.  By Theorem \ref{extIso} and \ref{freeProdMod}, we may arrange
the terms in $\mu$ so that $\mu = \alpha + \beta + \gamma$, where the terms in $\alpha$ start
with a letter from $\bas E$, the terms in $\beta$ start with $\bas F$ and the terms in $\gamma$ start with $1$.
If $\gamma \neq 0$, then the terms in $\vartheta\gamma$ start with a letter of $\bas F_1$, and the length of
each of its terms is 2.  But $\vartheta\alpha$ has terms of length greater than or equal to 3, and $\vartheta\beta$ has terms with
the degree of their leading letters greater than 1.  Therefore, no term in $\vartheta\mu$ can cancel
$\vartheta\gamma$, so $\vartheta\mu \neq 0$.  One can similarly argue that if $\alpha$ or $\beta$
are nonzero, then $\vartheta\alpha$ or $\varsigma\beta$ are nonzero, respectively.

Choose free resolutions of $k$ over $\extSalg$ and $\extTalg$ and write them in the form
\begin{equation*}
\xymatrix@R=.5pc@C=1pc{
\cdots \ar[r]^-{\del} & \extSalg \otimes_k V(2) \ar[r]^-{\del} & \extSalg \otimes_k V(1) \ar[r]^-{\del} & \extSalg \\
\cdots \ar[r]^-{\del} & \extTalg \otimes_k W(2) \ar[r]^-{\del} & \extTalg \otimes_k W(1) \ar[r]^-{\del} & \extTalg 
}
\end{equation*}
with graded $k$-vector spaces $V(i)$ and $W(i)$.  One then has a free resolution
\begin{equation} \label{freeResExtR}
\xymatrix@C=1pc{
\cdots \ar[r]^-{\del} & \extRalg \otimes_k (V(2) \oplus W(2)) \ar[r]^-{\del} & \extRalg \otimes_k (V(1) \oplus W(1)) \ar[r]^-{\del} & \extRalg 
}\end{equation}
of $k$ over $\extRalg$, where the differentials are given by restricting those in the resolutions of $k$ over $\extSalg$ and $\extTalg$, see
\cite[Example 36.e.2]{FHT01}.

For any pair of elements $\alpha,\beta \in \extMR^i$, define an $\extRalg$-linear map
\begin{equation*}
\begin{gathered}
\phi_{\alpha,\beta} \colon \extRalg \otimes_k (V(1) \oplus W(1)) \to \extMR \\
(v,w) \mapsto (\del(v)\alpha,\del(w)\beta)
\end{gathered}
\end{equation*}
Then $\phi_{\alpha,\beta}$ is a 1-cocycle and hence defines a class in $\Ext_\extRalg(k,\extMR)$.  If there
exists $v \in V(1)$ or $w \in W(1)$ so that $(\del v)\beta \neq 0$ or $(\del w)\alpha \neq 0$, then $\phi_{\alpha,\beta}$
represents a nonzero cohomology class.
 
As $M$ is nonzero, there exists $\mu \in \extMS^0 \setminus \{0\}$.  If $M$ is not free over $S$, then there
also exists $\mu' \in \extMS^1 \setminus \{0\}$.  If $\gldim S \geq 2$ (respectively $\gldim T \geq 2)$, then there exists
$\varsigma' \in \extSalg^2 \setminus \{0\}$ (respectively $\vartheta' \in \extTalg^2 \setminus \{0\}$).  For each $j \geq 1$, define

\begin{equation*}
\begin{gathered}
\alpha_j = \begin{cases} (\vartheta\varsigma)^{j-1}\vartheta\mu     & \text{if $M$ is not free} \\
                         (\vartheta\varsigma)^{j-1}\vartheta'\mu    & \text{if}~\gldim T \geq 2 \\
                         (\vartheta\varsigma)^j\vartheta\mu         & \text{if}~\gldim S \geq 2 \end{cases} \\
\beta_i = \begin{cases} (\varsigma\vartheta)^{j-1}\mu'                     & \text{if $M$ is not free} \\
                        (\varsigma\vartheta)^j\mu                          & \text{if}~\gldim T \geq 2 \\
                        (\varsigma\vartheta)^{j-1}\varsigma'\vartheta\mu   & \text{if}~\gldim S \geq 2 \end{cases}
\end{gathered}
\end{equation*}

Then for each $j \geq 1$, $\phi_{\alpha_j,\beta_j}$ defines a distinct nonzero class of $\Ext^1_\extRalg(k,\extMR)$,
with an internal degree of $\phi_{\alpha_j,\beta_j}$ either $-2j+1$, $-2j$, or $-2j-1$ if $M$ is not free,
$\gldim T \geq 2$ or $\gldim S \geq 2$, respectively.

When $S$ and $T$ have global dimension one, the graded free resolution \eqref{freeResExtR} is
\begin{equation*}
\xymatrix@C=1pc{
0 \ar[r] & \extRalg \otimes_k (V(1) \oplus W(1)) \ar[r]^-{\del} & \extRalg 
}\end{equation*}
Application of $\Hom_\extRalg(-,\extMR)$ gives a short exact sequence:
\begin{equation*}
0 \to \Hom_\extRalg(\extRalg,\extMR) \xra{\del^*} \Hom_\extRalg(\extRalg \otimes_k (V(1) \oplus W(1)), \extMR) \to
  \Ext^1_\extRalg(k,\extMR) \to 0
\end{equation*}
Since $\extSalg$ and $\extTalg$ are connected $k$-algebras, the graded bases $V(1)$ and $W(1)$ start in degree one,
and hence $\Hom_\extRalg(\extRalg \otimes_k (V(1) \oplus W(1)), \extMR)^1 \neq 0$.  Furthermore, this component is
not in the image of $\del^*$ since $\Hom_\extRalg(\extRalg,\extMR)$ starts in degree zero and $\del^*$ is homogeneous.
Hence $\Ext^1_\extRalg(k,\extMR)^1 \neq 0$.
\end{proof}

Using the short exact sequence given in Corollary \ref{calRModExSeq}, one also has the following:

\begin{corollary}
For each non-zero finitely generated $R$-module $L$, one has
\begin{equation*}
\depth_\extRalg \extLR \leq 1.
\end{equation*}
\end{corollary}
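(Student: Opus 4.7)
The plan is to apply $\Hom_\extRalg(k,-)$ to the short exact sequence of Corollary \ref{calRModExSeq} and to exploit the internal bigrading to produce a non-zero class in $\Ext^1_\extRalg(k,\extLR)$. First I would dispose of the case $\syz_2^R L=0$: then $L$ has projective dimension at most one, so $\extLR$ is finite-dimensional and concentrated in cohomological degrees $0$ and $1$; its top non-vanishing degree is annihilated by $\extRalg^{\geq 1}$, giving $\Hom_\extRalg(k,\extLR)\neq 0$ and $\depth_\extRalg \extLR=0$. So I may assume $\syz_2^R L=M\oplus N\neq 0$ with $M$ an $S$-module and $N$ a $T$-module, by Proposition \ref{DreKraSyz}, and Corollary \ref{calRModExSeq} supplies a homogeneous short exact sequence
\begin{equation*}
0\to A\to \extLR\to C\to 0,\quad A=\shift^{-2}\extMR\oplus\shift^{-2}\extNR,\quad C=\extLR/\extLR^{\geq 2}.
\end{equation*}

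Applying $\Hom_\extRalg(k,-)$ yields a long exact sequence preserving the internal grading. The key point is that $C$ vanishes in internal degrees $\geq 2$, so $\Hom_\extRalg(k,C)^j=0$ for every $j\geq 2$; the piece
\begin{equation*}
0=\Hom_\extRalg(k,C)^j\to \Ext^1_\extRalg(k,A)^j\to \Ext^1_\extRalg(k,\extLR)^j
\end{equation*}
of the long exact sequence then identifies $\Ext^1_\extRalg(k,A)^j$ with a subspace of $\Ext^1_\extRalg(k,\extLR)^j$ for every such $j$. Since the shift gives $\Ext^1_\extRalg(k,A)^j=\Ext^1_\extRalg(k,\extMR)^{j-2}\oplus\Ext^1_\extRalg(k,\extNR)^{j-2}$, applying Theorem \ref{depthSMod} (or its $T$-analogue) to whichever of $M,N$ is non-zero produces a non-zero class in $\Ext^1_\extRalg(k,\extMR)^i$ or $\Ext^1_\extRalg(k,\extNR)^i$ at some internal degree $i\geq 1$, and hence a non-zero class in $\Ext^1_\extRalg(k,A)^{i+2}$ with $i+2\geq 3\geq 2$. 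This transports to $\Ext^1_\extRalg(k,\extLR)$, forcing $\depth_\extRalg \extLR\leq 1$.

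The principal obstacle is that the naive depth lemma only supplies $\depth A\geq\min(\depth \extLR,\depth C+1)$, which fails to bound $\depth \extLR$ from above whenever $\depth C=0$; this equality does occur as soon as $L$ is not free, since then $\extLR^1=C^1$ automatically lies in the socle of $C$. Working one internal degree at a time circumvents the difficulty by restricting to an internal degree in which $C$ (and hence the connecting map from $\Hom_\extRalg(k,C)$) vanishes while $\Ext^1_\extRalg(k,A)$ does not.
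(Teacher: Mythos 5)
Your strategy is the paper's: dispose of $\pd_R L\le 1$ directly, then feed Proposition \ref{DreKraSyz} and Corollary \ref{calRModExSeq} into the long exact sequence of $\Ext_{\extRalg}(k,-)$ and use Theorem \ref{depthSMod} to exhibit a class of $\Ext^1_{\extRalg}(k,A)$ that survives to $\Ext^1_{\extRalg}(k,\extLR)$. The one real difference is that where the paper argues by rank when $\gldim S\ge 2$ or $\gldim T\ge 2$ (there $\Ext^1_{\extRalg}(k,\calX)$ has infinite rank while $\Hom_{\extRalg}(k,C)$ has finite rank, so the connecting map cannot be onto), you run a uniform internal-degree argument. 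For the classes $\phi_{\alpha_j,\beta_j}$ of the generic case of Theorem \ref{depthSMod} this works and is arguably cleaner: those classes are represented by cocycles raising the cohomological degree by $\deg\alpha_j\ge 1$, hence they sit in $\Ext^1_{\extRalg}(k,A)^{\ge 3}$ in your grading, whereas the connecting map only reaches the two graded pieces coming from $C^0$ and $C^1$.

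The gap is in the exceptional case of Theorem \ref{depthSMod}, namely $\gldim S=\gldim T=1$ with both summands of $\syz_2^R L$ free. The only class the theorem supplies there is represented by a cocycle sending the degree-one generators of $\calF_1$ into $\extMR^0=A^2$; in the convention for which $\Hom_{\extRalg}(k,C)^j\subseteq C^j$ (the one your vanishing claim uses), that class lies in $\Ext^1_{\extRalg}(k,A)^1$, not in degree $i+2\ge 3$ --- the superscript in ``$\Ext^1_{\extRalg}(k,\extMR)^1\ne 0$'' is taken with respect to the opposite convention $\varphi(\calF_i^n)\subseteq\extMR^{n-j}$, so you cannot add $2$ to it and compare with the grading on $\Hom_{\extRalg}(k,C)$. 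Degree $1$ is exactly where the connecting map from the socle of $C^1=\extLR^1$ lands, and the clash is not merely notational. Take $R=k[[x,y]]/(xy)$ and $L=k$, so that $\extRalg=k\langle\xi,\eta\rangle/(\xi^2,\eta^2)$ and $\syz_2^R k\cong S\oplus T$ is free over $S\times T$: a direct computation shows $\Ext^1_{\extRalg}(k,\extRalg^{\ge 2})$ is two-dimensional and concentrated in the single graded piece of cocycles carrying the degree-one generators of $\calF_1$ into $\extRalg^2$, and the connecting map from $\Hom_{\extRalg}(k,C)=C^1$ is surjective onto it; hence $\Ext^1_{\extRalg}(k,A)\to\Ext^1_{\extRalg}(k,\extLR)$ is the zero map and no class can be transported. (The corollary still holds in this example --- $\Ext^1_{\extRalg}(k,\extRalg)$ is one-dimensional, spanned by the class of the cocycle sending the $\extSalg$-generator to $\xi$ and the $\extTalg$-generator to $0$ --- but the witness lives in the kernel of $\Ext^1_{\extRalg}(k,C)\to\Ext^2_{\extRalg}(k,A)$, not in the image of $\Ext^1_{\extRalg}(k,A)$.) In fairness, the paper's own treatment of this sub-case rests on the same degree comparison and is vulnerable to the same example; but as written, your bound $i+2\ge 3$ is precisely where the argument breaks, and the exceptional case needs a genuinely different justification.
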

\begin{proof}
If $\pd_R L$ is finite, then $\rank_k \extLR$ is finite, hence $\depth_\extRalg \extLR = 0$.  If $\pd_R L = \infty$, then
$\syz^2_R(L) \neq 0$, and by Proposition \ref{DreKraSyz}, we have $\syz^2_R(L) = M \oplus N$ for some $S$-module $M$ and some
$T$-module $N$, with $M$ or $N$ non-zero.

Suppose that either $\gldim S \geq 2$ or $\gldim T \geq 2$, and set $\calX := \Ext_R(M\oplus N, k)$.
Then $\rank_k \Ext^1_\extRalg(k,\calX) = \infty$, by Theorem \ref{depthSMod}.  Then by Corollary \ref{calRModExSeq},
together with Theorem \ref{depthSMod}, there is an exact sequence
\begin{equation*}
\Hom_\extRalg(k, \extLR / \extLR^{\geq 2}) \xra{\varphi} \Ext^1_\extRalg(k,\calX) \xra{\psi}
\Ext^1_\extRalg(k,\extLR).\end{equation*}
Since $\rank_k \Hom_\extRalg(k,\extLR / \extLR^{\geq 2})$ is finite, $\varphi$ is not surjective, and hence $\psi$ is nonzero.

If both $S$ and $T$ have global dimension one, then one has $\Ext_\extRalg^1(k,\calX)^1 \neq 0$ by
Theorem \ref{depthSMod}.  Since $\varphi$ is a homogeneous homomorphism and $\Hom_\extRalg(k,\extLR / \extLR^{\geq 2})$
is concentrated in nonnegative degrees, $\varphi$ is not surjective, and hence $\psi$ is nonzero.
\end{proof}

\section*{Acknowledgements}

The author would like to thank Luchezar Avramov, Greg Peipmeyer, Srikanth Iyengar and Anders Frankild for many useful
discussions and comments throughout the evolution of this paper.

\providecommand{\bysame}{\leavevmode\hbox to3em{\hrulefill}\thinspace}
\providecommand{\MR}{\relax\ifhmode\unskip\space\fi MR }
\providecommand{\MRhref}[2]{%
  \href{http://www.ams.org/mathscinet-getitem?mr=#1}{#2}
}
\providecommand{\href}[2]{#2}

\end{document}